\documentclass[hidelinks,onefignum,onetabnum]{siamart251216}

\usepackage{lipsum}
\usepackage{amsfonts}
\usepackage{graphicx}
\usepackage{epstopdf}
\usepackage{algorithmic}
\ifpdf
  \DeclareGraphicsExtensions{.eps,.pdf,.png,.jpg}
\else
  \DeclareGraphicsExtensions{.eps}
\fi

\usepackage{algorithm}
\usepackage{cases}
\usepackage{hyperref}
\usepackage{cleveref}
\usepackage{geometry}
\usepackage{booktabs}

\newsiamremark{remark}{Remark}
\newsiamremark{hypothesis}{Hypothesis}
\crefname{hypothesis}{Hypothesis}{Hypotheses}
\newsiamthm{claim}{Claim}
\newsiamthm{asmp}{Assumption}
\newsiamremark{fact}{Fact}
\crefname{fact}{Fact}{Facts}

\newcommand{\R}{\mathbb{R}}

\newcommand{\KL}{\mathrm{KL}}

\renewcommand{\>}{\rangle}
\newcommand{\ones}{\mathbf{1}}

\newcommand{\Proj}{\mathrm{Proj}}

\newsiamthm{prob}{Problem}

\headers{Multi-Observation Inverse Optimal Transport}{Chenglong Bao, Zanyu Li and Defeng Sun}

\title{Learning Universal Costs via Multi-Observation Inverse Optimal Transport\thanks{Submitted to the editors DATE.
\funding{This research was supported by the National Natural Science Foundation of China (No.12625119, W2612010). Defeng Sun was supported by the Research Center for Intelligent Operations Research and RGC Senior Research Fellow Scheme No. SRFS2223-5S02.}}}

\author{Chenglong Bao\thanks{Corresponding Author. Yau Mathematical Sciences Center, Tsinghua University, Beijing 100084, China, 
  \email{clbao@tsinghua.edu.cn}.}
\and Zanyu Li\thanks{Department of Applied Mathematics, The Hong Kong Polytechnic University, Hong Kong, China,
  \email{zanyu.li@polyu.edu.hk}.}
\and Defeng Sun\thanks{Department of Applied Mathematics, The Hong Kong Polytechnic University, Hong Kong, China, \email{defeng.sun@polyu.edu.hk}.}
}

\usepackage{amsopn}
\DeclareMathOperator{\diag}{diag}

\ifpdf
\hypersetup{
  pdftitle={Learning Universal Costs via Multi-Observation Inverse Optimal Transport},
  pdfauthor={Chenglong Bao, Zanyu Li, and Defeng Sun}
}
\fi

\begin{document}

\maketitle

\begin{abstract}
Classical Inverse Optimal Transport (IOT) typically infers a cost function from a single observed transport plan, which may be sensitive to noise and observational biases in practice. To address this limitation, we propose the Multi-Observation Inverse Optimal Transport (M-IOT) framework, which learns a universal cost function from multiple transport plans. We reformulate the resulting bilevel optimization problem into a tractable single-level convex program and characterize the existence and geometric structure of the optimal solution set. Moreover, we propose a Block Coordinate Descent with Anderson-type Extrapolation (BCDwAE) algorithm and establish its Q-linear convergence. Extensive experiments on synthetic data, urban mobility flows, and e-commerce recommendations demonstrate that M-IOT reduces recovery error compared to single-observation baselines and extracts robust structural patterns from noisy multi-view data. 
\end{abstract}

\begin{keywords}
 Inverse Optimal Transport, Multiple Observations, Block Coordinate Descent, Anderson Acceleration
\end{keywords}

\begin{MSCcodes}
49Q22, 65K10, 62F30
\end{MSCcodes}

\section{Introduction}
\label{sec:introduction}

Optimal Transport (OT)~\cite{villani2008optimal} aligns probability distributions and has applications ranging from economics to machine learning \cite{genevay2019entropy,kolouri2017optimal,courty2016optimal}. While much of the OT literature focuses on the forward problem, i.e. finding the optimal transport plan given a known cost, many real-world scenarios present the inverse challenge: the cost function itself is unknown and should be inferred from observed matching patterns \cite{shi2023understanding,wang2023self,persiianov2024inverse,liu2019learning,yu2022explainable,dupuy2019estimating,samaran2024scconfluence}. The Inverse Optimal Transport (IOT) problem arises naturally in applications such as immigration flows \cite{stuart2020inverse}, marriage matching \cite{li2019learning}, and legal case analysis \cite{yu2022explainable}. In these settings, the goal is to infer the underlying structure that gives rise to the observed correlations. 

Since its formalization \cite{stuart2020inverse,li2019learning}, IOT has developed along several key directions. From the theoretical perspective, several studies \cite{10.1088/1361-6420/ae5ace,gonzalez2024nonlinear,chiu2022discrete,gonzalez2024identifiability} investigate the well-posedness of IOT problem. From a modeling standpoint ,   \cite{stuart2020inverse,chiu2022discrete} formulate inverse optimal transport in bayesian framework, while \cite{ma2020learning,dupuy2019estimating} cast it as a bilevel optimization problem with an equivalent convex single level formulation. As \cite{dupuy2019estimating} pointed out, this convex single level formulation corresponds to a maximum likelihood estimation (MLE) problem and has been widely studied in \cite{andrade2023sparsistency,10.1088/1361-6420/ae5ace,carlier2023sista}.  Based on the MLE framework, \cite{andrade2025learning} propose a sharpened Fenchel-Young loss and investigate the inverse unbalanced optimal transport problem, while \cite{10.1088/1361-6420/ae5ace} generalize the approach to Bregman regularized inverse optimal transport and analyze the conditions for existence and uniqueness of the optimal solution. Several algorithms have been proposed to solve the resulting convex IOT problem, including block coordinate descent methods \cite{ma2020learning,10.1088/1361-6420/ae5ace,carlier2023sista}, quasi-Newton method \cite{andrade2023sparsistency} and neural network method \cite{ma2020learning,liu2019learning}. Notably, alternative IOT formulations tailored to different application settings have also been developed \cite{elvander2025mixtures,mascherpa2025convex}, further expanding the scope of IOT.  The modeling capability of IOT has been demonstrated across diverse domains, including legal case analysis\cite{yu2022explainable}, marriage matching\cite{dupuy2019estimating},  biology\cite{samaran2024scconfluence,elvander2025mixtures} and machine learning\cite{shi2023understanding,wang2023self,persiianov2024inverse,liu2019learning}.

Despite these advances, the existing IOT literature typically adopts a single-observation paradigm, in which the cost function is inferred from a single transport plan. Across theoretical studies on well-posedness \cite{10.1088/1361-6420/ae5ace}, convex reformulations \cite{andrade2023sparsistency,andrade2025learning}, and application-driven work \cite{dupuy2019estimating,samaran2024scconfluence}, the prevailing approach relies on a single snapshot of data to recover the cost. However, this approach can be fragile: real-world observations are often noisy, context-dependent, and provide only a partial view of the underlying system. Moreover, for systems that evolve continuously over time, such as urban traffic flows or cellular transport processes, it is unrealistic to expect that an invariant underlying structure can be reliably identified from a single snapshot. 
Although a small number of recent works have begun to consider multiple observations, for example in theoretical studies of identifiability \cite{gonzalez2024identifiability,gonzalez2024nonlinear}, they do not provide a unified optimization framework for jointly recovering a shared, invariant cost structure from multiple observations. Consequently, a gap remains between the practical need for robustness under multiple observations and the current theoretical and algorithmic foundations of IOT methods.

To bridge this gap, we introduce the Multi-Observation Inverse Optimal Transport (M-IOT) framework, which aims to infer a universal cost by jointly learning from multiple transport plans. We reformulate the inverse problem, originally posed as a bilevel optimization that minimizes an aggregated Kullback-Leibler divergence, into an equivalent, tractable single-level convex optimization problem. Firstly, we characterize the existence of solutions and analyze the structure of the solution set. Secondly, we propose a Block Coordinate Descent with Anderson-type Extrapolation (BCDwAE) algorithm to solve the model, and establish its linear convergence. Finally, we evaluate the proposed framework on synthetic data, urban mobility flows, and e-commerce recommendation tasks. These experiments demonstrate that M-IOT effectively recovers robust cost structures from noisy, multi-view data and outperforms the single-observation model.


\section{Notations and Preliminaries}
\label{sec:preliminaries}

In this section, we summarize the notations used throughout the paper and review some fundamental concepts from convex analysis and optimal transport.

We use uppercase letters (e.g., $C, X$) for matrices and lowercase letters (e.g., $u, v, z$) for vectors. The set of $n \times n$ matrices with non-negative entries is denoted by $\mathbb{R}^{n \times n}_{+}$ and the set of $n \times n$ matrices with strictly positive entries is denoted by $\mathbb{R}^{n\times n}_{++}$. The vector of all ones is denoted by $\mathbf{1}_n$. The Frobenius inner product between two matrices $A, B \in \mathbb{R}^{n \times n}$ is $\langle A, B \rangle = \sum_{i,j} A_{ij}B_{ij}$. The vectorization of a matrix $C$ is denoted by $\text{vec}(C)$. 
We use $\ker(A)$ to denote the null space of a linear operator $A$, and $\ker(A)^\perp$ for its orthogonal complement. The orthogonal projection onto a subspace $\mathcal{W}$ is denoted by $\Pi_\mathcal{W}$. The symbol $\oplus$ denotes the direct sum of block variables, and $N_{\mathcal C}(C)$ denotes the normal cone to a closed convex set $\mathcal C$ at $C\in\mathcal C$.

Let $\mathbb{E}$ be a finite-dimensional Euclidean space equipped with the inner product $\langle \cdot, \cdot \rangle$ and the induced norm $\|\cdot\|$. For a non-empty closed convex set $\mathcal{D} \subseteq \mathbb{E}$, the distance from a point $x \in \mathbb{E}$ to $\mathcal{D}$ is defined as $\text{dist}(x, \mathcal{D}) := \min_{y \in \mathcal{D}} \|x - y\|$. The projection onto a closed set $\mathcal{D}$ is denoted by $\text{Proj}_\mathcal{D}$ and is defined by $\text{Proj}_\mathcal{D}(x):=\arg\min_{y\in\mathcal{D}}\frac{1}{2}\|y-x\|^2$. We denote the relative interior of a convex set $\mathcal{D}$ by $\text{ri}(\mathcal{D})$. For any proper closed convex function $f:\mathbb{E}\rightarrow (-\infty,+\infty]$, define the sublevel set of $f$ to be $\mathcal{S}_\alpha^f:=\{x\in\mathbb{E}: f(x) \leq \alpha\}$. Given a nonempty convex set $\mathcal{C}$, define the recession cone of $\mathcal{C}$ to be $R_\mathcal{C}:=\{d:x+td\in \mathcal{C}, \forall x\in \mathcal{C} , \forall t > 0\}$ and denote $L_\mathcal{C}:= R_\mathcal{C}\cap (-R_\mathcal{C})$ the lineality space of $\mathcal{C}$. For the sake of completeness, we list some properties of the recession cone used in this article. For more detailed analysis of the recession cone, we refer the interested readers to the classic convex analysis textbooks \cite{rockafellar1997convex,bertsekas2009convex}.

Let $\mu \in \R^n_+$ and $\nu \in \R^n_+$ be two discrete probability distributions over a set of $n$ locations, satisfying $\sum_{i=1}^n \mu_i = \sum_{j=1}^n \nu_j = 1$. Let $C \in \R^{n \times n}$ be a cost matrix where $C_{ij}$ denotes the cost of transporting a unit of mass from location $i$ to location $j$. The set of feasible transport plans between $\mu$ and $\nu$ is the transport polytope, defined as:
\begin{equation}
    \mathcal{U}(\mu, \nu) := \{ X \in \R^{n \times n}_+ \mid X \ones_n = \mu, X^T \ones_n = \nu \},
\end{equation}
where $\ones_n$ is the $n$-dimensional vector of all ones. Given $\gamma>0$, the entropy-regularized OT problem seeks the transport plan $X \in \mathcal{U}(\mu, \nu)$ by minimizing
\begin{equation}
    \label{eq:forward_ot}
    L_\gamma(C, \mu, \nu) := \min_{X \in \mathcal{U}(\mu, \nu)} \<C, X\> - \gamma H(X),
\end{equation}
where $H(X) = -\sum_{i,j} X_{ij}(\log X_{ij} - 1)$ is the entropy of the transport plan $X$. 

Throughout this paper, $\{\hat{X}^l\}_{l=1}^K$ denotes the set of observed transport plans, and we assume $\hat{X}^l\in\mathbb{R}^{n\times n}_{++}$. Zero-valued data entries are removed during preprocessing, so the observation matrices supplied to the model are strictly positive. We denote $\mu^l = \hat{X}^l\ones_n$ and $\nu^l = (\hat{X}^l)^T\ones_n$.

\section{Problem Formulation}
\label{sec:formulation}

In this section, we develop the M-IOT framework. We begin by analyzing the theoretical properties of the inverse problem and demonstrate its ill-posedness in practical settings. We then introduce a robust optimization framework, first formulated as a bilevel program and subsequently reformulated as an equivalent, tractable single-level convex program.

\subsection{The  Inverse Problem and Its Ill-Posedness}
\label{sec:ideal_problem}

Before constructing our optimization model, we first examine the well-posedness of the inverse problem. This problem aims to identify a single, universal cost matrix $C$ that can be determined by a collection of observations $\{ (\hat{X}^l, \mu^l, \nu^l) \}_{l=1}^K$. Formally, we seek a matrix $C$ such that
\begin{equation}
    \forall l \in \{1, \dots, K\}, \quad X^*(C, \mu^l, \nu^l) = \hat{X}^l,
\end{equation}
where $$
X^*(C, \mu^l, \nu^l) := \arg\min_{X \in \mathcal{U}(\mu^l, \nu^l)} \<C, X\> - \gamma H(X).
$$

For a single observation $l$, the cost matrices satisfying $X^*(C, \mu^l, \nu^l) = \hat{X}^l$ form an affine subspace $\mathcal{S}_l$. For $\hat{X}^l \in \mathbb{R}^{n \times n}_{++}$, this solution set is characterized in \cite{10.1088/1361-6420/ae5ace,chiu2022discrete} as
\begin{equation}
    \mathcal{S}_l = \{ \hat{C}^l + A \mid A \in \mathcal{V} \},
\end{equation}
where $\hat{C}^l$ is  defined by $(\hat{C}^l)_{ij} = -\gamma \log \hat{X}^l_{ij}$, and $\mathcal{V} := \{ A \in \mathbb{R}^{n \times n} \mid A_{ij} = a_i + b_j \text{ for some } a, b \in \mathbb{R}^n \}$ is a $(2n-1)$-dimensional vector space. This characterization implies that all solution sets $\mathcal{S}_l$ are parallel affine subspaces, each being a translate of the same vector space $\mathcal{V}$, which severely restricts the existence of a joint solution, as formalized below.

\begin{proposition}
\label{prop:data_consistency}
Let $\hat{X}^1, \hat{X}^2 \in \mathbb{R}^{n \times n}_{++}$ be two observed transport plans. Their corresponding solution sets $\mathcal{S}_1$ and $\mathcal{S}_2$ are equal if and only if there exist positive diagonal matrices $D_\alpha = \mathrm{diag}(\alpha)$ and $D_\beta = \mathrm{diag}(\beta)$ for some $\alpha, \beta \in \mathbb{R}^n_{++}$ such that $\hat{X}^2 = D_\alpha \hat{X}^1 D_\beta$.
\end{proposition}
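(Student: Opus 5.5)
The plan is to reduce the equality of the two affine subspaces to a statement about their base points, using the characterization $\mathcal{S}_l = \hat{C}^l + \mathcal{V}$ recalled just before the proposition. Two translates of the same linear subspace coincide exactly when the difference of their base points lies in that subspace. So the first step is to record that $\mathcal{S}_1 = \mathcal{S}_2$ if and only if $\hat{C}^2 - \hat{C}^1 \in \mathcal{V}$.

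One direction of this equivalence: if $\mathcal{S}_1=\mathcal{S}_2$, then $\hat{C}^2 \in \mathcal{S}_1$, so $\hat{C}^2 - \hat{C}^1 \in \mathcal{V}$. For the other direction, if $\hat{C}^2-\hat{C}^1 = A_0\in\mathcal{V}$, then
\[
\hat{C}^2+\mathcal{V}=\hat{C}^1+A_0+\mathcal{V}=\hat{C}^1+\mathcal{V},
\]
because $\mathcal{V}$ is a linear subspace and hence $A_0+\mathcal{V}=\mathcal{V}$.

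The second step translates membership in $\mathcal{V}$ into the multiplicative scaling condition. Since $\hat{X}^1,\hat{X}^2$ are strictly positive, the logarithms are well defined, and
\[
(\hat{C}^2-\hat{C}^1)_{ij} = -\gamma\bigl(\log \hat{X}^2_{ij} - \log \hat{X}^1_{ij}\bigr).
\]
This matrix lies in $\mathcal{V}$ if and only if there exist $a,b\in\R^n$ with $\log \hat{X}^2_{ij} - \log\hat{X}^1_{ij} = -(a_i+b_j)/\gamma$ for all $i,j$. Exponentiating, this is equivalent to $\hat{X}^2_{ij} = \alpha_i \hat{X}^1_{ij}\beta_j$ with $\alpha_i = e^{-a_i/\gamma}>0$ and $\beta_j=e^{-b_j/\gamma}>0$, which is exactly $\hat{X}^2 = D_\alpha \hat{X}^1 D_\beta$.

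Conversely, given $\alpha,\beta\in\R^n_{++}$ with $\hat{X}^2 = D_\alpha\hat{X}^1D_\beta$, I will set $a_i=-\gamma\log\alpha_i$ and $b_j=-\gamma\log\beta_j$. Taking logarithms entrywise then recovers $\hat{C}^2-\hat{C}^1 = (a_i+b_j)_{ij}\in\mathcal{V}$. Combining the two steps gives the proposition.

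There is no real obstacle here. The only points needing care are the use of strict positivity, which makes the logarithms and the bijection $\alpha\leftrightarrow a$ legitimate, and the fact that no uniqueness of $(a,b)$ is needed. The non-uniqueness $a+c\ones_n$, $b-c\ones_n$ corresponds to $(\alpha,\beta)\mapsto(t\alpha,\beta/t)$ with $t>0$. It is harmless because the statement only asserts existence of $\alpha,\beta$.
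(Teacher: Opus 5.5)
Your proposal is correct and follows essentially the same route as the paper: reduce $\mathcal{S}_1=\mathcal{S}_2$ to $\hat{C}^2-\hat{C}^1\in\mathcal{V}$, then take logarithms and exponentiate to obtain the diagonal scaling $\hat{X}^2=D_\alpha\hat{X}^1D_\beta$. The only differences are cosmetic: you use the opposite sign convention for $(a,b)$, and you spell out the translate-of-a-subspace equivalence and the converse direction explicitly, whereas the paper states these in one line.
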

\begin{proof}
The affine subspaces $\mathcal{S}_1 = \hat{C}^1 + \mathcal{V}$ and $\mathcal{S}_2 = \hat{C}^2 + \mathcal{V}$ are equal if and only if  $\hat{C}^1 - \hat{C}^2 \in \mathcal{V}$. By definition of $\hat{C}^l$, we have
$$ (\hat{C}^1 - \hat{C}^2)_{ij} = (-\gamma \log X^1_{ij}) - (-\gamma \log \hat{X}^2_{ij}) = \gamma \log(\hat{X}^2_{ij}/X^1_{ij}). $$
This vector belongs to $\mathcal{V}$ if and only if there exist vectors $a, b \in \mathbb{R}^n$ such that $\gamma \log(\hat{X}^2_{ij}/\hat{X}^1_{ij}) = a_i + b_j$ for all $i,j$. Dividing by $\gamma$ and taking the exponential of both sides yields
$$ \hat{X}^2_{ij}/\hat{X}^1_{ij} = \exp(a_i/\gamma) \exp(b_j/\gamma). $$
Let $\alpha_i := \exp(a_i/\gamma)$ and $\beta_j := \exp(b_j/\gamma)$. Since $a,b$ are real vectors, $\alpha, \beta \in \mathbb{R}^n_{++}$. The condition is then equivalent to $\hat{X}^2_{ij} = \alpha_i \hat{X}^1_{ij} \beta_j$, which is the element-wise form of $\hat{X}^2 = D_\alpha \hat{X}^1 D_\beta$.  
\end{proof}

This consistency condition directly determines the existence of a solution to the inverse problem.

\begin{proposition}
\label{prop:existence_ideal}
Let $\{\hat{X}^l\}_{l=1}^K \subset \mathbb{R}^{n \times n}_{++}$ be a set of observations, and let $\mathcal{C} \subset \mathbb{R}^{n \times n}$ be a nonempty closed convex set.
\begin{enumerate}
    \item \textbf{(Existence)} A solution to the inverse problem exists if and only if both of the following conditions hold:
    \begin{enumerate}
        \item for any pair $(l,m)$, there exist positive diagonal matrices $D_\alpha = \mathrm{diag}(\alpha)$ and $D_\beta = \mathrm{diag}(\beta)$ for some $\alpha, \beta \in \mathbb{R}^n_{++}$ such that $\hat{X}^l = D_\alpha \hat{X}^m D_\beta$.
        \item The common affine subspace $\mathcal{S}_1$ has a non-empty intersection with the prior set $\mathcal{C}$.
    \end{enumerate}
    \item \textbf{(Solution Set Characterization)} If a solution exists, the set of all valid cost matrices, $\mathcal{S}_{\text{feasible}}$, is given by $\mathcal{S}_{\text{feasible}} = \mathcal{S}_1 \cap \mathcal{C}$.
\end{enumerate}
\end{proposition}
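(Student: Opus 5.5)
The plan is to reduce everything to the affine-subspace characterization already in hand. First, interpret "a solution to the inverse problem" as a matrix $C\in\mathcal{C}$ with $X^*(C,\mu^l,\nu^l)=\hat{X}^l$ for all $l$. For each fixed $l$, this last condition is equivalent to $C\in\mathcal{S}_l$, by the cited characterization $\mathcal{S}_l=\hat{C}^l+\mathcal{V}$, valid since $\hat{X}^l\in\mathbb{R}^{n\times n}_{++}$. Hence the set of solutions is exactly
\[
\mathcal{S}_{\text{feasible}}=\Bigl(\bigcap_{l=1}^K \mathcal{S}_l\Bigr)\cap\mathcal{C}.
\]
Both parts of the proposition will then follow from understanding $\bigcap_l \mathcal{S}_l$.

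The key structural step is that the $\mathcal{S}_l$ are translates of the same subspace $\mathcal{V}$. Two such translates are either equal or disjoint: if $C\in\mathcal{S}_l\cap\mathcal{S}_m$, then $\hat{C}^l-\hat{C}^m=(C-\hat{C}^m)-(C-\hat{C}^l)\in\mathcal{V}$, so $\mathcal{S}_l=\mathcal{S}_m$. Consequently the intersection $\bigcap_l\mathcal{S}_l$ is nonempty if and only if $\mathcal{S}_l=\mathcal{S}_m$ for every pair $(l,m)$, and in that case it equals $\mathcal{S}_1$. By Proposition~\ref{prop:data_consistency}, $\mathcal{S}_l=\mathcal{S}_m$ holds exactly when $\hat{X}^l=D_\alpha\hat{X}^m D_\beta$ for some positive diagonal $D_\alpha,D_\beta$; this is condition (a). The roles of $l$ and $m$ are symmetric, since inverting the diagonal matrices swaps them.

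To assemble existence: if a solution $C$ exists, then $C\in\bigcap_l\mathcal{S}_l$, so all the $\mathcal{S}_l$ coincide, giving (a), and $C\in\mathcal{S}_1\cap\mathcal{C}$, giving (b). Conversely, (a) gives $\bigcap_l\mathcal{S}_l=\mathcal{S}_1$, and (b) then provides a point of $\mathcal{S}_1\cap\mathcal{C}$, which is a solution. The characterization in part 2 is the displayed identity with $\bigcap_l\mathcal{S}_l=\mathcal{S}_1$ substituted.

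I do not expect a real obstacle. The only point needing care is the equal-or-disjoint dichotomy for parallel affine subspaces, which is what allows pairwise consistency to be upgraded to a nonempty common intersection. Closedness and convexity of $\mathcal{C}$ play no role in the argument.
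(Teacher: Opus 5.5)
Your proposal is correct and follows the paper's proof essentially step for step. Both write the solution set as $(\bigcap_l \mathcal{S}_l)\cap\mathcal{C}$, use that translates of $\mathcal{V}$ are either equal or disjoint, and invoke Proposition~\ref{prop:data_consistency} for pairwise equality; you additionally prove that equal-or-disjoint property and note the symmetry in $(l,m)$, which the paper leaves implicit.
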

\begin{proof}
A solution $C^*$ to the inverse problem must satisfy: $C^* \in (\bigcap_{l=1}^K \mathcal{S}_l) \cap \mathcal{C}$. Existence is therefore equivalent to this intersection being non-empty.
As the sets $\{\mathcal{S}_l\}$ are parallel affine subspaces, their intersection $\bigcap_{l=1}^K \mathcal{S}_l$ is non-empty if and only if $\mathcal{S}_1 = \mathcal{S}_2 = \dots = \mathcal{S}_K$. By Proposition~\ref{prop:data_consistency}, this is equivalent to condition (1a). If condition (1a) holds, then $\bigcap_{l=1}^K \mathcal{S}_l = \mathcal{S}_1$. The full solution set is thus $\mathcal{S}_{\text{feasible}} = \mathcal{S}_1 \cap \mathcal{C}$, which proves part (2). For this set to be non-empty, condition (1b) must also hold.
\end{proof}

Proposition \ref{prop:existence_ideal} shows that the inverse problem admits an exact solution only when all observations are related by positive diagonal scaling. In practice, noise and model mismatch typically violate the consistency condition in Proposition \ref{prop:data_consistency}, making the solution sets non-intersecting and the problem ill-posed. This motivates a variational formulation that minimizes the aggregate discrepancy across observations.

\subsection{The Optimization Framework}

We now formulate the problem as minimizing the divergence between the observed plans and the corresponding model predictions. Specifically, we first seek a cost matrix $C \in \mathcal{C}$ that minimizes the sum of Kullback-Leibler (KL) divergences across all $K$ observations. This leads to the following bilevel optimization problem:
\begin{equation}\label{eq:bilevel}
    \inf_{C \in \mathcal{C}} \sum_{l=1}^K \text{KL}(\hat{X}^l \,||\, X^*(C, \mu^l, \nu^l)),
\end{equation}
where $X^*(C, \mu^l, \nu^l)$ is the unique solution to the forward entropy-regularized OT problem for the $l$-th observation.
Although conceptually straightforward,  \ref{eq:bilevel} is computationally challenging due to its nested structure. 
To derive a more tractable formulation, we follow \cite{10.1088/1361-6420/ae5ace,ma2020learning} and reformulate the M-IOT problem as a single-level, jointly convex optimization problem as follows:

\begin{equation}\label{eq:convex}
\begin{aligned}
    \inf_{C \in \mathcal{C}, \{u^l, v^l\}_{l=1}^K} J(u, v, C)
    &:= \gamma \sum_{l=1}^K \sum_{i,j=1}^n \exp\left(\frac{u_i^l + v_j^l - C_{ij}}{\gamma}\right) \\
    &\quad - \sum_{l=1}^K \langle \hat{X}^l, u^l \oplus v^l - C \rangle.
\end{aligned}
\end{equation}

The relationship between the bilevel and single-level formulations is formalized below.

\begin{proposition}
\label{prop:equivalence}
Assume that for each observation $l$, the observed transport plan $\hat{X}^l$ is strictly positive. Then, the bi-level optimization problem \ref{eq:bilevel} is equivalent to the single-level convex optimization  \ref{eq:convex}. The equivalence holds in the sense that if an optimal solution $C^*$ exists for one problem, an optimal solution exists for the other, and they coincide.
\end{proposition}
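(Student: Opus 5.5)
The approach is to partially minimize $J$ over the dual variables $(u^l,v^l)$ for fixed $C$, and to show that the resulting function of $C$ equals the bilevel objective in \eqref{eq:bilevel} up to an additive constant independent of $C$. Fix $C\in\mathcal{C}$ and observe that $J(u,v,C)$ separates over $l$. For each $l$, the map
\[
(u^l,v^l)\mapsto \gamma\sum_{i,j}\exp\big((u^l_i+v^l_j-C_{ij})/\gamma\big)-\langle \mu^l,u^l\rangle-\langle\nu^l,v^l\rangle+\langle \hat X^l,C\rangle
\]
is the negative of the standard Lagrangian dual of the entropic OT problem \eqref{eq:forward_ot} with marginals $(\mu^l,\nu^l)$ (here $\langle\hat X^l,u^l\oplus v^l\rangle=\langle\mu^l,u^l\rangle+\langle\nu^l,v^l\rangle$).

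Since $\mu^l,\nu^l>0$ (because $\hat X^l\in\R^{n\times n}_{++}$), strong duality holds and the dual infimum is attained. The attainment follows from Sinkhorn scaling, or from coercivity of the dual modulo the direction $(t\ones,-t\ones)$. The minimizers satisfy $X^*(C,\mu^l,\nu^l)_{ij}=\exp((u^l_i+v^l_j-C_{ij})/\gamma)$, and at optimality the exponential term equals $\gamma\sum_{ij}X^*_{ij}=\gamma$. Hence
\[
\min_{u^l,v^l}J_l=\gamma-\langle\hat X^l,\,u^{l*}\oplus v^{l*}-C\rangle=\gamma+\gamma\langle\hat X^l,-\log X^*(C,\mu^l,\nu^l)\rangle .
\]
In the second equality I use $u^{l*}_i+v^{l*}_j-C_{ij}=\gamma\log X^*_{ij}$. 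Now $\KL(\hat X^l\|X^*)=\langle\hat X^l,\log\hat X^l-\log X^*\rangle-\sum\hat X^l+\sum X^*$, and both masses equal $1$, since $X^*\in\mathcal U(\mu^l,\nu^l)$ shares the marginals of $\hat X^l$. Therefore
\[
\min_{u,v}J(u,v,C)=\gamma\sum_l\KL(\hat X^l\|X^*(C,\mu^l,\nu^l))+\gamma\sum_l\big(1+H(\hat X^l)-1\big),
\]
that is, $\gamma$ times the bilevel objective plus a constant $c_0$ depending only on the data. I will verify this bookkeeping carefully.

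With the value function $\phi(C):=\inf_{u,v}J=\gamma F(C)+c_0$ in hand, equivalence follows. If $C^*$ minimizes $F$ over $\mathcal C$, then choosing dual minimizers $(u^{l*},v^{l*})$ at $C^*$ gives $J(u^*,v^*,C^*)=\phi(C^*)\le\phi(C)\le J(u,v,C)$ for all feasible $(u,v,C)$, so $(u^*,v^*,C^*)$ is optimal for \eqref{eq:convex}. Conversely, if $(u^*,v^*,C^*)$ is optimal for \eqref{eq:convex}, then $\phi(C^*)\le J(u^*,v^*,C^*)\le\inf\phi$, so $C^*$ minimizes $\phi$ and hence $F$. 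Joint convexity of $J$ is immediate, since $J$ is the exponential of an affine map plus a linear term.

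The main obstacle is the justification of the partial minimization step, namely strong duality and attainment of the dual for each fixed $C$. I would handle it by exhibiting the explicit Sinkhorn scaling form $X^*=\mathrm{diag}(e^{u/\gamma})e^{-C/\gamma}\mathrm{diag}(e^{v/\gamma})$. Existence of this form is guaranteed by Sinkhorn's theorem for a positive kernel with positive marginals. I would then check the first-order conditions of the convex dual directly: the gradient vanishes exactly when the row and column sums match $\mu^l,\nu^l$. This avoids invoking abstract constraint qualifications.
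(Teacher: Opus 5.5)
Your proposal is correct and follows essentially the same route as the paper. Both arguments partially minimize $J$ over the dual potentials for fixed $C$ and use the Gibbs form $X^*_{ij}=\exp((u_i+v_j-C_{ij})/\gamma)$ to show that the value function equals $\gamma\sum_l \mathrm{KL}(\hat X^l\|X^*(C,\mu^l,\nu^l))+\gamma\sum_l H(\hat X^l)$. Your explicit justification of dual attainment via Sinkhorn scaling, and your two-directional transfer of minimizers, fill in details the paper's proof leaves implicit.
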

\begin{proof}
For a fixed cost matrix $C \in \mathcal{C}$ and each observation $l$, let $X^l_C = X^*(C, \mu^l, \nu^l)$ be the unique solution to the forward entropy-regularized OT problem \eqref{eq:forward_ot} and let $u^l,v^l$ to be the optimal dual variables corresponding to $X^l_C$. We first have
\begin{align*}
    \KL(\hat{X}^l \,||\, X^l_C) &= \sum_{i,j} (\hat{X}^l_{ij}\log \hat{X}^l_{ij} - \hat{X}^l_{ij}\log X^l_{C,ij}) - \sum_{i,j}\hat{X}^l_{ij} + \sum_{i,j}X^l_{C,ij} \\
    &= -H(\hat{X}^l) - \sum_{i,j} \hat{X}^l_{ij} \log X^l_{C,ij} + 1 \\
    &= -H(\hat{X}^l) - \frac{1}{\gamma}\sum_{i,j} \hat{X}^l_{ij} (u_i^l + v_j^l - C_{ij}) + 1,
\end{align*}
where the last equality follows from the optimality condition for the forward problem. Summing over all $K$ observations, the bilevel objective is:
\begin{equation}
\label{eq:proof_bilevel_obj}
    \sum_{l=1}^K \KL(\hat{X}^l \,||\, X^l_C) = K - \sum_l H(\hat{X}^l) - \frac{1}{\gamma} \sum_{l=1}^K \langle \hat{X}^l, u^l \oplus v^l - C \rangle.
\end{equation}

On the other hand:
\begin{align*}
    \min_{\{u^l,v^l\}} J(u,v,C) &= \gamma \sum_{l,i,j} \exp\left(\frac{u_{C,i}^l+v_{C,j}^l-C_{ij}}{\gamma}\right) - \sum_{l} \langle \hat{X}^l, u_C^l \oplus v_C^l - C \rangle \\
    &= \gamma \sum_{l,i,j} X^l_{C,ij} - \sum_l \langle \hat{X}^l, u_C^l \oplus v_C^l - C \rangle \\
    &= \gamma K - \sum_l \langle \hat{X}^l, u_C^l \oplus v_C^l - C \rangle.
\end{align*}
Minimizing this over $C \in \mathcal{C}$ is equivalent to minimizing $-\sum_l \langle \hat{X}^l, u_C^l \oplus v_C^l - C \rangle$, which is precisely  \eqref{eq:bilevel}. This establishes the equivalence.
\end{proof}

For notational convenience, let $z := (u^1, \dots, u^K, v^1, \dots, v^K, \text{vec}(C))^\top \in \mathbb{R}^{2nK + n^2}$. Denote the feasible set $\mathcal{Z} := (\mathbb{R}^n)^{2K} \times \mathcal{C}$, the optimal value by $J^*:=\inf_{z\in\mathcal Z}J(z)$, and the optimal solution set by $\mathcal Z^*:=\{z\in\mathcal Z:J(z)=J^*\}$. Let $A:\mathbb{R}^{2nK+n^2} \rightarrow \mathbb{R}^{Kn^2}$ be the linear operator such that the vector of exponential arguments is given by $y = Az$, where $y_{lij} = u_i^l + v_j^l - C_{ij}$. The objective function can be compactly written as $J(z) = \gamma \langle \exp(Az/\gamma), \mathbf{1}\rangle - \langle \text{vec}(\hat{X}), Az \rangle$, where $\text{vec}(\hat{X}) := (\text{vec}(\hat{X}^1),\ldots,\text{vec}(\hat{X}^K))^\top$.  

The gradient $\nabla J(z)$ and Hessian $\nabla^2 J(z)$ are given by:
\begin{align}
    \nabla J(z) &= A^\top \left( \exp\left(\frac{Az}{\gamma}\right) - \text{vec}(\hat{X}) \right), \label{eq:gradient_J}\\
    \nabla^2 J(z) &= \frac{1}{\gamma} A^\top D(z) A, \label{eq:hessian_J}
\end{align}
where $D(z) = \text{diag}(\exp(Az/\gamma))$ is a diagonal matrix with strictly positive entries. The structure of the Hessian implies that $\nabla^2 J(z)$ is positive semidefinite, ensuring that $J(z)$ is a convex function. The specific spectral properties of the operator $A$ will be analyzed in detail in Supplementary material for interested readers.

\section{Existence and Characterization of Solution Set}
\label{sec:analysis}

In this section, we study the existence and the structure of the optimal solution set of \eqref{eq:convex}. We first establish the existence of optimal solutions under mild regularity assumptions and then characterize the structure of the optimal solution set.

Before proceeding with the analysis, we introduce the following lemma.
\begin{lemma}\label{lem:g}
Assume $\text{vec}(\hat{X})$ is strictly positive. Define
\[
    g(y)=\gamma\langle \exp(y/\gamma),\mathbf{1}\rangle-\langle \text{vec}(\hat{X}),y\rangle,
    \qquad y\in\mathbb{R}^{Kn^2}.
\]
Then $g$ has a unique global minimizer and all its sublevel sets $\mathcal{S}_\alpha^g$ are compact.
\end{lemma}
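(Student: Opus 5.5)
The function $g$ is separable: writing $y=(y_k)_{k=1}^{Kn^2}$ and $x_k:=\text{vec}(\hat{X})_k>0$, we have
\[
g(y)=\sum_{k=1}^{Kn^2} \phi_k(y_k),\qquad \phi_k(t):=\gamma e^{t/\gamma}-x_k t .
\]
The plan is to study each scalar function $\phi_k$ separately and then assemble the conclusions, since both uniqueness of the minimizer and compactness of sublevel sets pass through finite sums of coercive strictly convex one-dimensional functions.

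First, each $\phi_k$ is smooth and strictly convex, because $\phi_k''(t)=\gamma^{-1}e^{t/\gamma}>0$. Its derivative is $\phi_k'(t)=e^{t/\gamma}-x_k$. This vanishes exactly at $t_k^*=\gamma\log x_k$, and the point is well defined because $x_k>0$. Hence $\phi_k$ has the unique global minimizer $t_k^*$, with minimum value $\phi_k(t_k^*)=\gamma x_k-\gamma x_k\log x_k$. It follows that $y^*:=\gamma\log\text{vec}(\hat{X})$ (entrywise) is the unique global minimizer of $g$. Indeed, $g(y)\ge\sum_k\phi_k(t_k^*)=g(y^*)$, with equality only if every $y_k=t_k^*$ by strict convexity of each $\phi_k$. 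Alternatively, one can note that $g$ is strictly convex with $\nabla g(y^*)=0$.

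For compactness, note first that each sublevel set $\mathcal{S}^g_\alpha$ is closed, since $g$ is continuous. The main step, and the only place where positivity of $\hat X$ really matters beyond defining $y^*$, is boundedness. I will show that each $\phi_k$ is coercive. As $t\to+\infty$ the exponential dominates the linear term, so $\phi_k(t)\to+\infty$. As $t\to-\infty$ we have $\phi_k(t)\ge -x_k t\to+\infty$, and this uses $x_k>0$; if some $x_k=0$, coercivity would fail in that direction. Now set $m_k:=\min\phi_k$. If $g(y)\le\alpha$, then for each $k$,
\[
\phi_k(y_k)\le \alpha-\sum_{j\ne k}m_j .
\]
By coercivity, this confines $y_k$ to a bounded interval. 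Therefore $\mathcal{S}^g_\alpha$ is contained in a box, so it is bounded, and being also closed it is compact. It may of course be empty when $\alpha<g(y^*)$.

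I do not expect a real obstacle here. The only delicate point is to make explicit that strict positivity of $\text{vec}(\hat X)$ is what gives coercivity as $t\to-\infty$ and the existence of $t_k^*$. Equivalently, one could invoke the standard fact that a convex function with a nonempty bounded level set (here $\{y^*\}$) has all sublevel sets bounded, but the direct coordinatewise argument avoids needing that fact.
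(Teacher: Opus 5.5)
Your proposal is correct and follows essentially the same route as the paper. Both identify $y^*=\gamma\log\text{vec}(\hat X)$ as the unique critical point of a strictly convex function, and both use the separable structure with the coordinatewise bound $\phi_k(y_k)\le\alpha-\sum_{j\ne k}m_j$ together with coercivity of each scalar term to get boundedness. You also make explicit two points the paper leaves implicit: closedness of $\mathcal{S}^g_\alpha$ via continuity, and the role of $x_k>0$ in coercivity as $t\to-\infty$.
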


\begin{proof}
Denote $b=\text{vec}(\hat{X})\in\mathbb{R}^{Kn^2}_{++}$. Since $\nabla g(y)=\exp(y/\gamma)-b$, the equation $\nabla g(y)=0$ has the unique solution $y^*=\gamma\log b$. Moreover, $\nabla^2 g(y)=\gamma^{-1}\diag\!\left(\exp(y/\gamma)\right)\succ 0$, so $g$ is strictly convex and $y^*$ is its unique global minimizer. Denote $g_q(t):=\gamma\exp(t/\gamma)-b_qt$, then $g(y)=\sum_{q}g_q(y_q)$. For every $q$, $g_q(t)\to+\infty$ as $t\to\pm\infty$. Let $m_q:=\min_{t\in\mathbb{R}}g_q(t)$. If $y\in\mathcal{S}_\alpha^g$, then for every $q$,
\[
    g_q(y_q)
    =g(y)-\sum_{p\neq q}g_p(y_p)
    \leq \alpha-\sum_{p\neq q}m_p.
\]
Therefore $\mathcal{S}_\alpha^g$ is bounded and hence compact.
\end{proof}

We now impose the following regularity assumption on the feasible set $\mathcal{Z}$.

\begin{asmp}
\label{assump:closed_projection}
Either $\mathcal{Z}$ is a polyhedral set, or $R_\mathcal{Z}\cap\ker(A)\subseteq L_\mathcal{Z}$, where $R_\mathcal{Z}$ and $L_\mathcal{Z}$ denote the recession cone and lineality space of $\mathcal{Z}$, respectively.
\end{asmp}

\begin{remark}
For every nonempty sublevel set
\[
    \mathcal{S}_\alpha^J:=\{z\in\mathcal{Z}:J(z)\leq\alpha\}
    =\{z\in\mathcal{Z}:Az\in\mathcal{S}_\alpha^g\},
\]
Lemma~\ref{lem:g} and \cite[Proposition 1.4.2(d)]{bertsekas2009convex} imply $R_{\mathcal{S}_\alpha^J}=R_\mathcal{Z}\cap\ker(A)$.
Thus, the second condition in Assumption~\ref{assump:closed_projection} means that every recession direction of an objective sublevel set that is invisible to $J$ is in fact a lineality direction of the feasible set. In particular, if $R_\mathcal{Z}\cap\ker(A)=\{0\}$, then every nonempty sublevel set of $J$ is bounded and Assumption~\ref{assump:closed_projection} is automatically satisfied.
\end{remark}

The following observation will also be useful in the convergence analysis. It formalizes the fact that the sublevel sets are compact after quotienting out their lineality directions.

\begin{lemma}\label{lem:compact_mod_lineality}
Assume $\mathcal{N}:=R_\mathcal{Z}\cap\ker(A)\subseteq L_\mathcal{Z}$.
Then $\mathcal{N}$ is a linear subspace. For every nonempty sublevel set $\mathcal{S}_\alpha^J$, with $\mathcal{W}:=\mathcal{N}^\perp$, one has $ \mathcal{S}_\alpha^J
=\big(\mathcal{S}_\alpha^J\cap\mathcal{W}\big)+\mathcal{N}$ and the canonical slice $\mathcal{S}_\alpha^J\cap\mathcal{W}$ is compact.
\end{lemma}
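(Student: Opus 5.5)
First I show that $\mathcal{N}$ is a linear subspace. Since $R_\mathcal{Z}$ is a convex cone and $\ker(A)$ is a subspace, $\mathcal{N}$ is a convex cone. Take $d\in\mathcal{N}$. The hypothesis gives $d\in L_\mathcal{Z}$, so $-d\in R_\mathcal{Z}$. Also $-d\in\ker(A)$, hence $-d\in\mathcal{N}$. A convex cone closed under negation is a linear subspace.

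Next I prove the decomposition $\mathcal{S}_\alpha^J=(\mathcal{S}_\alpha^J\cap\mathcal{W})+\mathcal{N}$. The inclusion ``$\supseteq$'' holds as follows. Let $z\in\mathcal{S}_\alpha^J$ and $d\in\mathcal{N}$. Because $d\in L_\mathcal{Z}$, we have $z+d\in\mathcal{Z}$. Because $Ad=0$, we have $J(z+d)=g(A(z+d))=g(Az)=J(z)\le\alpha$, so $z+d\in\mathcal{S}_\alpha^J$. For ``$\subseteq$'', write $z=\Pi_\mathcal{W}z+\Pi_\mathcal{N}z$. Then $\Pi_\mathcal{W}z=z+(-\Pi_\mathcal{N}z)$ with $-\Pi_\mathcal{N}z\in\mathcal{N}$. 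By the argument just given, $\Pi_\mathcal{W}z\in\mathcal{S}_\alpha^J\cap\mathcal{W}$, and this yields the decomposition.

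It remains to show that the slice $\mathcal{S}_\alpha^J\cap\mathcal{W}$ is compact, which is the main step. It is closed, since $\mathcal{Z}$ is closed (because $\mathcal{C}$ is), $J$ is continuous, and $\mathcal{W}$ is closed. It is also convex and nonempty: if $z\in\mathcal{S}_\alpha^J$, then $\Pi_\mathcal{W}z$ lies in the slice. For a nonempty closed convex set, boundedness is equivalent to having a trivial recession cone, so I compute that cone. The recession cone of an intersection of closed convex sets with a nonempty intersection is the intersection of the recession cones. The Remark preceding the lemma gives $R_{\mathcal{S}_\alpha^J}=R_\mathcal{Z}\cap\ker(A)=\mathcal{N}$, which comes from Lemma~\ref{lem:g} and \cite[Proposition 1.4.2(d)]{bertsekas2009convex}. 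Since $R_\mathcal{W}=\mathcal{W}$, the recession cone of the slice is $\mathcal{N}\cap\mathcal{N}^\perp=\{0\}$. The slice is therefore bounded, and hence compact.

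The only delicate point is the recession-cone computation for the sublevel set. If one prefers not to rely on the Remark, a direct argument works. Suppose $z_k$ is an unbounded sequence in the slice. Pass to a subsequence with $z_k/\|z_k\|\to d$ and $\|d\|=1$. Lemma~\ref{lem:g} says $Az_k$ lies in the compact set $\mathcal{S}_\alpha^g$, so $Az_k$ is bounded and therefore $Ad=0$. Closedness and convexity of $\mathcal{Z}$ give $d\in R_\mathcal{Z}$. Closedness of $\mathcal{W}$ gives $d\in\mathcal{W}$. Then $d\in\mathcal{N}\cap\mathcal{N}^\perp=\{0\}$, which contradicts $\|d\|=1$.
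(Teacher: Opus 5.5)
Your proof is correct and follows the paper's argument step for step: the same negation argument shows $\mathcal{N}$ is a subspace, and the invariance of $\mathcal{Z}$ and $J$ under translations by $\mathcal{N}$ (which the paper phrases as $\mathcal{N}\subseteq L_{\mathcal{S}_\alpha^J}$) gives the decomposition via $z=\Pi_\mathcal{W}z+\Pi_\mathcal{N}z$; compactness then follows from $R_{\mathcal{S}_\alpha^J\cap\mathcal{W}}=\mathcal{N}\cap\mathcal{N}^\perp=\{0\}$. Your optional sequential argument is a valid, self-contained substitute for the recession-cone calculus the paper cites from Bertsekas, though it is not needed.
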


\begin{proof}
Since $\mathcal{N}\subseteq L_\mathcal{Z}$, for every $d\in\mathcal{N}$ one also has $-d\in R_\mathcal{Z}$. Because $d\in\ker(A)$, this shows that $-d\in\mathcal{N}$, and hence $\mathcal{N}$ is a linear subspace. Moreover, $J(z+d)=J(z)$ for every $d\in\mathcal{N}$, while $\mathcal{Z}+d=\mathcal{Z}$ because $d\in L_\mathcal{Z}$. Hence $\mathcal{N}\subseteq L_{\mathcal{S}_\alpha^J}$. Together with $R_{\mathcal{S}_\alpha^J}=\mathcal{N}$, this yields $L_{\mathcal{S}_\alpha^J}=R_{\mathcal{S}_\alpha^J}=\mathcal{N}$.

For any $z\in\mathcal{S}_\alpha^J$, decompose $z=w+d$ with $w=\Pi_\mathcal{W}z$ and $d=\Pi_\mathcal{N}z\in\mathcal{N}$. Since $-d\in L_{\mathcal{S}_\alpha^J}$, we have $w=z-d\in\mathcal{S}_\alpha^J$. Therefore $ \mathcal{S}_\alpha^J
=\big(\mathcal{S}_\alpha^J\cap\mathcal{W}\big)+\mathcal{N}$. The set $\mathcal{S}_\alpha^J\cap\mathcal{W}$ is closed and convex, and its recession cone is
\[
    R_{\mathcal{S}_\alpha^J\cap\mathcal{W}}
    =R_{\mathcal{S}_\alpha^J}\cap\mathcal{W}
    =\mathcal{N}\cap\mathcal{N}^\perp
    =\{0\}.
\]
Hence it is bounded, and therefore compact in finite dimensions.
\end{proof}

\begin{proposition}
\label{prop:existence}
Under Assumption~\ref{assump:closed_projection}, the M-IOT optimization problem \eqref{eq:convex} admits at least one optimal solution.
\end{proposition}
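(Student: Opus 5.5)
The plan is to split according to the two alternatives in Assumption~\ref{assump:closed_projection}, and in both cases to reduce existence of a minimizer of $J$ over $\mathcal{Z}$ to a statement about $g$ over the image set $A\mathcal{Z}$. We use $J(z)=g(Az)$, so $J^*=\inf_{y\in A\mathcal{Z}}g(y)$. First note $J^*$ is finite: $\mathcal{Z}$ is nonempty, and $g$ is bounded below by its minimum from Lemma~\ref{lem:g}. Fix $\alpha>J^*$, so that the sublevel set $\mathcal{S}_\alpha^J$ is nonempty.

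\textbf{Non-polyhedral case.} Suppose $R_\mathcal{Z}\cap\ker(A)\subseteq L_\mathcal{Z}$. Lemma~\ref{lem:compact_mod_lineality} gives $\mathcal{S}_\alpha^J=(\mathcal{S}_\alpha^J\cap\mathcal{W})+\mathcal{N}$, and the slice $K:=\mathcal{S}_\alpha^J\cap\mathcal{W}$ is compact. It is also nonempty, since any $z\in\mathcal{S}_\alpha^J$ can be written $z=w+d$ with $w\in K$ and $d\in\mathcal{N}$. Moreover, $J$ is constant along $\mathcal{N}\subseteq\ker(A)$, so
\[
\inf_{\mathcal{Z}}J=\inf_{\mathcal{S}_\alpha^J}J=\inf_{K}J .
\]
$J$ is continuous and $K$ is compact and nonempty, so Weierstrass gives a minimizer $w^*\in K$. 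It is optimal for \eqref{eq:convex}, and so is every point of $w^*+\mathcal{N}$.

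\textbf{Polyhedral case.} Suppose $\mathcal{Z}$ is polyhedral. Then $A\mathcal{Z}$ is a polyhedron, being the linear image of a polyhedron (Minkowski–Weyl), and in particular it is closed. Set $\beta:=\alpha$ and consider $T:=A\mathcal{Z}\cap\mathcal{S}_\beta^g$. This is the intersection of a closed set with a compact set (Lemma~\ref{lem:g}), hence compact. It is nonempty, since $\alpha>J^*=\inf_{A\mathcal{Z}}g$. Since $g$ is continuous it attains its minimum on $T$ at some $y^*$, and $y^*$ minimizes $g$ over all of $A\mathcal{Z}$. Choosing any $z^*\in\mathcal{Z}$ with $Az^*=y^*$ gives $J(z^*)=g(y^*)=J^*$.

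\textbf{Main obstacle.} The key point is closedness of the image $A\mathcal{Z}$, or equivalently the fact that minimizing sequences do not escape along recession directions of $\mathcal{Z}$ that are invisible to $J$. A linear image of a general closed convex set need not be closed. In the polyhedral case closedness comes from polyhedral theory; in the other case it is supplied by Lemma~\ref{lem:compact_mod_lineality}, which quotients out those directions. A direct alternative is to cite \cite[Proposition 3.2.1/3.2.2]{bertsekas2009convex}: a closed proper convex function whose nonempty sublevel sets have recession cone equal to lineality space, or that is minimized over a polyhedral set with a suitable recession condition, attains its minimum. The remark following Assumption~\ref{assump:closed_projection} already identifies $R_{\mathcal{S}_\alpha^J}=R_\mathcal{Z}\cap\ker(A)$, which is exactly what that proposition requires.
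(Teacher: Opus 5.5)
Your proof is correct, but it is organized differently from the paper's. The paper handles both alternatives of Assumption~\ref{assump:closed_projection} with a single argument. It quotients out all of $\ker(A)$ by defining $\widetilde J$ on $\Pi_{\ker(A)^\perp}(\mathcal{Z})$. It then cites the closed-image theorem \cite[Proposition 1.4.13]{bertsekas2009convex} to get that this projected set is closed. Finally, it notes that the reduced sublevel sets have trivial recession cone, since $\Pi_{\ker(A)^\perp}(\mathcal{Z})\subseteq\ker(A)^\perp$ meets $\ker(A)$ only at $0$, so they are compact.

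Your polyhedral case is essentially the same argument. Because $A\mathcal{Z}=A\,\Pi_{\ker(A)^\perp}(\mathcal{Z})$ and $A$ is injective on $\ker(A)^\perp$, closedness of $A\mathcal{Z}$ (which you get from Minkowski--Weyl) is the same fact. Intersecting with the compact set $\mathcal{S}^g_\alpha$ then replaces the recession-cone computation.

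Your non-polyhedral case is genuinely different. Instead of quotienting by the whole kernel, you quotient only by $\mathcal{N}=R_\mathcal{Z}\cap\ker(A)$. Lemma~\ref{lem:compact_mod_lineality} then gives a nonempty compact slice $\mathcal{S}_\alpha^J\cap\mathcal{N}^\perp$ inside $\mathcal{Z}$ itself, so Weierstrass applies directly in the original space and no closedness of any linear image is needed.

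The trade-off is this. The paper gets one uniform argument for both alternatives, at the cost of an external closedness theorem. Your version is a case split, but its main case needs only recession-cone calculus for sublevel sets, which the paper has already done. It also places a minimizer in the canonical slice, which is the same localization structure used later in Lemma~\ref{lem:localization}. Your side remark about \cite[Propositions 3.2.1/3.2.2]{bertsekas2009convex} is also valid. Since $R_J=L_J=\ker(A)$, the condition $R_\mathcal{Z}\cap R_J=L_\mathcal{Z}\cap L_J$ is exactly the second alternative of the assumption. In the polyhedral case, $R_\mathcal{Z}\cap R_J\subseteq L_J$ holds trivially.
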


\begin{proof}
The objective $J(z)=g(Az)$ is invariant along $\ker(A)$. Let $\mathcal{W}_A:=\ker(A)^\perp$ and let $\Pi_{\mathcal{W}_A}$ denote the orthogonal projection onto $\mathcal{W}_A$. Define
\[
    \widetilde J:\Pi_{\mathcal{W}_A}(\mathcal{Z})\to\mathbb{R},
    \qquad
    \widetilde J(w):=J(z)
\]
for any $z\in\mathcal{Z}$ satisfying $\Pi_{\mathcal{W}_A}z=w$. This is well-defined because two such preimages differ by an element of $\ker(A)$.

By Assumption~\ref{assump:closed_projection} and \cite[Proposition 1.4.13]{bertsekas2009convex}, the projected feasible set $\Pi_{\mathcal{W}_A}(\mathcal{Z})$ is closed. It is also convex. Consider a nonempty sublevel set
\[
    \widetilde{\mathcal{S}}_\alpha
    :=\{w\in\Pi_{\mathcal{W}_A}(\mathcal{Z}):\widetilde J(w)\leq\alpha\} =\{w\in\Pi_{\mathcal{W}_A}(\mathcal{Z}):Aw\in\mathcal{S}_\alpha^g\}.
\]
Lemma~\ref{lem:g} and \cite[Proposition 1.4.2(d)]{bertsekas2009convex} give $R_{\widetilde{\mathcal{S}}_\alpha}
=R_{\Pi_{\mathcal{W}_A}(\mathcal{Z})}\cap\ker(A)$. But $\Pi_{\mathcal{W}_A}(\mathcal{Z})\subseteq\mathcal{W}_A$ and $\mathcal{W}_A\cap\ker(A)=\{0\}$, so
$R_{\widetilde{\mathcal{S}}_\alpha}=\{0\}$. Hence every nonempty sublevel set of $\widetilde J$ is compact.

Since $\widetilde J$ is continuous and bounded below, choose any $\alpha>\inf\widetilde J$ for which $\widetilde{\mathcal{S}}_\alpha$ is nonempty. The minimum of $\widetilde J$ over this compact sublevel set is also its global minimum. Lifting a minimizer back to $\mathcal{Z}$ yields an optimal solution of \eqref{eq:convex}.
\end{proof}

We next characterize the entire optimal solution set.

\begin{proposition}
\label{prop:optimal_set_structure}
Let $\mathcal{Z}=(\R^n)^{2K}\times\mathcal{C}$, where $\mathcal{C}$ is closed and convex, and assume that the optimal solution set $\mathcal{Z}^*$ is nonempty. Let $z^*\in\mathcal{Z}^*$ be arbitrary. Then
\begin{equation}
    \mathcal{Z}^*=(z^*+\ker(A))\cap\mathcal{Z}.
\end{equation}
In particular, all optimal solutions have the same image under $A$.
\end{proposition}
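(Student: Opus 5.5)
The proof is a two-sided inclusion, and the key ingredient is the strict convexity of $g$ from Lemma~\ref{lem:g} combined with $J(z)=g(Az)$.

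\emph{Inclusion $\supseteq$.} Take $z\in(z^*+\ker(A))\cap\mathcal{Z}$. Then $Az=Az^*$, so $J(z)=g(Az)=g(Az^*)=J^*$. Since $z\in\mathcal{Z}$, this gives $z\in\mathcal{Z}^*$.

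\emph{Inclusion $\subseteq$.} Take any $z\in\mathcal{Z}^*$. It suffices to show $Az=Az^*$, because then $z-z^*\in\ker(A)$ and $z\in\mathcal{Z}$; this also yields the ``in particular'' statement. Suppose instead that $Az\neq Az^*$. Since $\mathcal{Z}=(\R^n)^{2K}\times\mathcal{C}$ is convex, the midpoint $\bar z=\tfrac12(z+z^*)$ lies in $\mathcal{Z}$. Lemma~\ref{lem:g} gives $\nabla^2 g\succ0$ everywhere, so $g$ is strictly convex on all of $\mathbb{R}^{Kn^2}$. Using linearity of $A$,
\[
J(\bar z)=g\!\left(\tfrac12 Az+\tfrac12 Az^*\right)<\tfrac12 g(Az)+\tfrac12 g(Az^*)=J^*.
\]
This contradicts the definition of $J^*$ as the infimum over $\mathcal{Z}$. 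Hence $Az=Az^*$.

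The argument is short, and the only point needing care is that strict convexity of $J$ fails along $\ker(A)$. That is why it must be applied to $g$ in the image space rather than to $J$ directly. No compactness or Assumption~\ref{assump:closed_projection} is needed, since nonemptiness of $\mathcal{Z}^*$ is assumed.
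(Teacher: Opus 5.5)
Your proof is correct and follows essentially the same route as the paper. The paper likewise shows $Az_1=Az_2$ for any two optimal solutions via the midpoint inequality, using the strict convexity of $g$ and the convexity of $\mathcal{Z}$. It obtains the reverse inclusion from $J(\tilde z)=g(A\tilde z)=g(Az^*)=J^*$.
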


\begin{proof}
Let $z_1,z_2\in\mathcal{Z}^*$ be two optimal solutions. If $Az_1\neq Az_2$, then the convexity of $\mathcal{Z}$ and the strict convexity of $g$ imply
\[
    J\!\left(\frac{z_1+z_2}{2}\right)
    =g\!\left(\frac{Az_1+Az_2}{2}\right)
    <\frac{g(Az_1)+g(Az_2)}{2}
    =J^*,
\]
a contradiction. Thus $Az_1=Az_2$ for all $z_1,z_2\in\mathcal{Z}^*$. In particular, for any $z\in\mathcal{Z}^*$, $A(z-z^*)=0$, so $z\in(z^*+\ker(A))\cap\mathcal{Z}$.

Conversely, if $\tilde z\in(z^*+\ker(A))\cap\mathcal{Z}$, then $A\tilde z=Az^*$ and therefore $J(\tilde z)=g(A\tilde z)=g(Az^*)=J^*$. Hence $\tilde z\in\mathcal{Z}^*$, proving the claimed characterization.
\end{proof}

\section{A Block Coordinate Descent with Anderson-type Extrapolation (BCDwAE) Algorithm}
\label{sec:PRAABCD}

In this section, we propose an efficient algorithm to solve \eqref{eq:convex}. Motivated by Anderson acceleration \cite{anderson1965iterative}, we propose a Block Coordinate Descent with Anderson-type Extrapolation (BCDwAE) algorithm.

\subsection{Algorithm Derivation}

One iteration of the base BCD scheme produces an intermediate point $z^{k+}=(u^{k+},v^{k+},C^{k+})$ from the current iterate $z^k=(u^k,v^k,C^k)$. The $u$- and $v$-blocks are minimized exactly and sequentially, while the $C$-block is updated by one projected-gradient step. For fixed $C^k$ and $v^k$, the update for $u$ is obtained from $\nabla_uJ(u,v^k,C^k)=0$. Subsequently, $v$ is updated using the new $u$. For each observation $l\in\{1,\dots,K\}$,
\begin{align}
    u^{l,k+}_i
    &=\gamma\log\mu_i^l
    -\gamma\log\left(\sum_{j=1}^n\exp\left(\frac{v_j^{l,k}-C_{ij}^k}{\gamma}\right)\right),
    \quad \forall i, \label{eq:update_u}\\
    v^{l,k+}_j
    &=\gamma\log\nu_j^l
    -\gamma\log\left(\sum_{i=1}^n\exp\left(\frac{u_i^{l,k+}-C_{ij}^k}{\gamma}\right)\right),
    \quad \forall j. \label{eq:update_v}
\end{align}
The update for $C$ enforces $C\in\mathcal{C}$ via a projected-gradient step. Let $\tilde z^k=(u^{k+},v^{k+},C^k)$. The gradient with respect to $C$ at this intermediate point is
\begin{equation}
    \nabla_CJ(\tilde z^k)
    =\sum_{l=1}^K\hat X^l
    -\sum_{l=1}^K\exp\left(\frac{u^{l,k+}\oplus v^{l,k+}-C^k}{\gamma}\right).
\end{equation}
For a fixed step size $\eta>0$,
\begin{equation}
    C^{k+}=\Proj_\mathcal{C}\left(C^k-\eta\nabla_CJ(\tilde z^k)\right). \label{eq:update_C}
\end{equation}
We call $z^{k+}$ the base BCD point associated with $z^k$ and define the corresponding residual by
\[
    r^k:=z^{k+}-z^k.
\]
Standard BCD iterations may converge slowly. We therefore apply a regularized damped Anderson-type extrapolation to these residuals, following \cite{wei2021stochastic}.
Let $m$ be the memory depth and $m_k=\min(k,m)$. Define
\begin{equation}
    R^k=[\Delta r^{k-m_k},\dots,\Delta r^{k-1}],
    \qquad
    Z^k=[\Delta z^{k-m_k},\dots,\Delta z^{k-1}],
\end{equation}
where $\Delta r^i=r^{i+1}-r^i$ and $\Delta z^i=z^{i+1}-z^i$. The coefficient vector is obtained from
\begin{equation}
    \Gamma_k^*
    =\arg\min_\Gamma
    \left\|r^k-R^k\Gamma\right\|_2^2
    +\delta_k\left\|Z^k\Gamma\right\|_2^2,
\end{equation}
where $\delta_k>0$. Let $\beta_k\in(0,1]$ be the mixing factor and $\alpha_k\in[0,1]$ the damping parameter. The accelerated iterate is
\begin{equation}
    z^{k+1}
    =\Proj_\mathcal{Z}\left(
        z^k+\beta_kr^k
        -\alpha_k(Z^k+\beta_kR^k)\Gamma_k^*
    \right). \label{eq:anderson_update}
\end{equation}
The projection acts as $\Proj_\mathcal{C}$ on the $C$-block and as the identity on the $u$- and $v$-blocks.

\begin{algorithm}[t]
\caption{Block Coordinate Descent with Anderson-type Extrapolation (BCDwAE) for M-IOT}\label{alg:PRAABCD}
\begin{algorithmic}[1]
\REQUIRE Observations $\{\hat X^l,\mu^l,\nu^l\}_{l=1}^K$, parameters $\gamma,\eta,\delta_k,\beta_k,\alpha_k,m$, tolerance $\epsilon$.
\STATE Initialize $z^0=(u^0,v^0,C^0)\in\mathcal{Z}$, $R_{hist}= []$, $Z_{hist}= []$.
\FOR{$k=0,1,2,\dots$}
    \STATE Update $u^{k+}$ via \eqref{eq:update_u}.
    \STATE Update $v^{k+}$ via \eqref{eq:update_v}.
    \STATE Update $C^{k+}=\Proj_\mathcal{C}(C^k-\eta\nabla_CJ(u^{k+},v^{k+},C^k))$.
    \STATE Set $z^{k+}\leftarrow(u^{k+},v^{k+},C^{k+})$ and $r^k\leftarrow z^{k+}-z^k$.
    \IF{$\|r^k\|<\epsilon$} \STATE \textbf{break} \ENDIF
    \IF{$k>0$}
        \STATE $\Delta r^{k-1}\leftarrow r^k-r^{k-1}$, $\Delta z^{k-1}\leftarrow z^k-z^{k-1}$.
        \STATE Update $R^k,Z^k$ with the newest columns while maintaining memory depth $m$.
    \ENDIF
    \IF{$k=0$ or history is empty}
        \STATE $z^{k+1}\leftarrow\Proj_\mathcal{Z}(z^k+\beta_kr^k)$.
    \ELSE
        \STATE $\Gamma_k^*\leftarrow\arg\min_\Gamma\|r^k-R^k\Gamma\|_2^2+\delta_k\|Z^k\Gamma\|_2^2$.
        \STATE $z^{k+1}\leftarrow\Proj_\mathcal{Z}(z^k+\beta_kr^k-\alpha_k(Z^k+\beta_kR^k)\Gamma_k^*)$.
    \ENDIF
\ENDFOR
\RETURN $C^k$.
\end{algorithmic}
\end{algorithm}

\subsection{Convergence Analysis}

We now establish the linear convergence of the unsafeguarded BCDwAE iteration. We make the following assumption throughout this section.

\begin{asmp}\label{asmp:convergence}
\begin{itemize}
    \item[(1)] All observed transport plans satisfy $\hat X^l\in\mathbb{R}_{++}^{n\times n}$ for $1\leq l\leq K$.
    \item[(2)] $\mathcal{C}$ is nonempty, closed, and convex, and $\mathcal{N}:=R_\mathcal{Z}\cap\ker(A)\subseteq L_\mathcal{Z}$.
    \item[(3)] There exists $z^*\in\mathcal{Z}^*$ such that $\operatorname{ri}(\mathcal{Z})\cap(z^*+\ker(A))\neq\emptyset$.
\end{itemize}
\end{asmp}

\begin{remark}
Assumption~\ref{asmp:convergence}(2) is the same structural condition used in Section~\ref{sec:analysis} to control recession directions invisible to the objective, and it implies existence by Proposition~\ref{prop:existence}. If $\mathcal{C}$ is affine, then $R_\mathcal{Z}=L_\mathcal{Z}$, so Assumption~\ref{asmp:convergence}(2) is automatic; moreover $\operatorname{ri}(\mathcal{Z})=\mathcal{Z}$, and hence Assumption~\ref{asmp:convergence}(3) is automatic once $\mathcal{Z}^*\neq\emptyset$. The relative-interior condition also allows non-affine constraints, for example inequality constraints, provided the optimal affine solution set intersects the relative interior of the feasible set.
\end{remark}

Let $r^k=z^{k+}-z^k$ denote the residual between the current iterate and its base BCD point, and define $F(z):=J(u,v,C)+\mathbf{1}_\mathcal{C}(C)$ and $F^*:=\min F=J^*$, where $\mathbf{1}_\mathcal{C}(C)=0$ for $C\in\mathcal C$ and $+\infty$ otherwise. Here $\partial F$ denotes the convex subdifferential of $F$. For the $k$-th base iteration, let $z_u^k=(u^{k+},v^k,C^k)$ and $z_v^k=(u^{k+},v^{k+},C^k)$.
Then $\nabla_uJ(z_u^k)=0$ and $\nabla_vJ(z_v^k)=0$, while $C^{k+}$ is obtained by one projected-gradient step from $z_v^k$.

Let $M^kr^k:=\alpha_k(Z^k+\beta_kR^k)\Gamma_k^*$ and $H^kr^k:=\beta_kr^k-M^kr^k$, so that $z^{k+1}=\Proj_\mathcal{Z}(z^k+H^kr^k)$.

\begin{lemma}\label{lem:Anderson_bound}
Suppose $0\leq\alpha_k\leq1$, $\beta_k>0$, $\delta_k>0$, and $\delta_k^{-1}\leq c_2\beta_k^2$ for some constant $c_2>0$. Then
\begin{align}
    \|H^kr^k\|^2
    &\leq 2(1+c_2)\beta_k^2\|r^k\|^2, \label{eq:H_bound}\\
    \|M^kr^k\|
    &\leq \alpha_k\beta_k(1+\sqrt{c_2})\|r^k\|. \label{eq:M_bound}
\end{align}
\end{lemma}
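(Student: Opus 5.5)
The plan is to extract two elementary bounds from the optimality of $\Gamma_k^*$, namely $\|R^k\Gamma_k^*\|\leq\|r^k\|$ and $\|Z^k\Gamma_k^*\|\leq\delta_k^{-1/2}\|r^k\|$, and then combine them with the triangle inequality in two different groupings of $M^kr^k$ and $H^kr^k$. Nothing from the earlier sections is needed beyond the definition of $\Gamma_k^*$. Throughout, write $\Gamma:=\Gamma_k^*$, $R:=R^k$, $Z:=Z^k$, $r:=r^k$.

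\emph{Step 1 (normal equations).} The regularized least-squares objective $\|r-R\Gamma\|_2^2+\delta_k\|Z\Gamma\|_2^2$ is a convex quadratic in $\Gamma$, so its minimizer satisfies
\[
R^\top(R\Gamma-r)+\delta_kZ^\top Z\Gamma=0.
\]
Taking the inner product with $\Gamma$ gives
\[
\|R\Gamma\|^2+\delta_k\|Z\Gamma\|^2=\langle R\Gamma,r\rangle\leq\|R\Gamma\|\,\|r\|.
\]
This yields three consequences:
\begin{itemize}
\item Dropping the $\delta_k$ term gives $\|R\Gamma\|\leq\|r\|$.
\item Substituting this back gives $\delta_k\|Z\Gamma\|^2\leq\|r\|^2$. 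Combined with $\delta_k^{-1}\leq c_2\beta_k^2$, this gives $\|Z\Gamma\|\leq\sqrt{c_2}\,\beta_k\|r\|$.
\item Expanding $\|r-R\Gamma\|^2=\|r\|^2-2\langle R\Gamma,r\rangle+\|R\Gamma\|^2$ and using the identity above gives $\|r-R\Gamma\|^2\leq\|r\|^2$. (Alternatively, this follows from comparing the objective value at $\Gamma$ with its value at $\Gamma=0$.)
\end{itemize}

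\emph{Step 2 (bound on $M^k$).} Apply the triangle inequality to $M^kr=\alpha_k(Z\Gamma+\beta_kR\Gamma)$:
\[
\|M^kr\|\leq\alpha_k\big(\sqrt{c_2}\,\beta_k\|r\|+\beta_k\|r\|\big)=\alpha_k\beta_k(1+\sqrt{c_2})\|r\|.
\]
This is \eqref{eq:M_bound}.

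\emph{Step 3 (bound on $H^k$).} The naive estimate $\|\beta_kr-M^kr\|^2\leq2\beta_k^2\|r\|^2+2\|M^kr\|^2$ only gives the weaker constant $2\big(1+(1+\sqrt{c_2})^2\big)$, so $H^kr$ has to be regrouped. Write
\[
H^kr=\beta_k(r-\alpha_kR\Gamma)-\alpha_kZ\Gamma.
\]
Since $\alpha_k\in[0,1]$, the first term is a convex combination, $r-\alpha_kR\Gamma=(1-\alpha_k)r+\alpha_k(r-R\Gamma)$. Using $\|r-R\Gamma\|\leq\|r\|$ from Step 1, its norm is at most $\|r\|$. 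Then $\|a+b\|^2\leq2\|a\|^2+2\|b\|^2$ gives
\[
\|H^kr\|^2\leq2\beta_k^2\|r\|^2+2\alpha_k^2c_2\beta_k^2\|r\|^2\leq2(1+c_2)\beta_k^2\|r\|^2,
\]
which is \eqref{eq:H_bound}.

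The only real obstacle is choosing this regrouping in Step 3 so that the stated constant $2(1+c_2)$ is obtained rather than a looser one; once the convex-combination bound on $r-\alpha_kR\Gamma$ is in hand, the rest is routine.
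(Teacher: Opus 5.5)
Your proof is correct and follows essentially the same route as the paper: the same three bounds $\|R\Gamma\|\le\|r\|$, $\|r-R\Gamma\|\le\|r\|$, $\|Z\Gamma\|\le\delta_k^{-1/2}\|r\|$ from the normal equations and the comparison with $\Gamma=0$, and the same regrouping $H^kr^k=\beta_k\big((1-\alpha_k)r+\alpha_k(r-R\Gamma)\big)-\alpha_kZ\Gamma$. The only cosmetic difference is in the final squaring step. You use $\|a+b\|^2\le2\|a\|^2+2\|b\|^2$ directly, whereas the paper first bounds $\|H^kr^k\|\le\beta_k(1+\sqrt{c_2})\|r^k\|$ and then applies $(1+\sqrt{c_2})^2\le2(1+c_2)$.
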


The bound \eqref{eq:H_bound} follows from \cite[Lemma~1 in the supplementary material]{wei2021stochastic} under the stated parameter restrictions. A self-contained proof of both \eqref{eq:H_bound} and the companion estimate \eqref{eq:M_bound} is given in Section~\ref{app:anderson_bound} of the supplementary material.

We next show that all constants needed below can be chosen on a fixed set that is compact modulo $\mathcal{N}$.

\begin{lemma}\label{lem:localization}
Suppose Assumption~\ref{asmp:convergence} holds and $\delta_k^{-1}\leq c_2\beta_k^2$ with $0<\beta_k\leq1$ and $0\leq\alpha_k\leq1$. Fix an arbitrary reference step-size bound $\bar\eta>0$ and restrict the projected-gradient step to $0<\eta\leq\bar\eta$. Let
\[
    \ell_0:=J(z^0),
    \qquad
    \mathcal{S}_0:=\{z\in\mathcal{Z}:J(z)\leq\ell_0\},
    \qquad
    \mathcal{W}:=\mathcal{N}^\perp.
\]
Then there exists a convex set $\mathcal{B}\subseteq\mathcal{Z}$, depending on $\bar\eta$ but not on the actual choice of $\eta\in(0,\bar\eta]$, such that
\[
    \mathcal{B}+\mathcal{N}=\mathcal{B},
    \qquad
    \mathcal{B}\cap\mathcal{W}\ \text{is compact}.
\]
Moreover, if $z^0,\ldots,z^k\in\mathcal{S}_0$, then every point
\[
    z^j,\ z_u^j,\ z_v^j,\ z^{j+},\ z^{j+1},
    \qquad 0\leq j\leq k,
\]
that is defined at those iterations belongs to $\mathcal{B}$. Consequently, there exist constants $L_{\bar\eta}>0$ and $\kappa_{\bar\eta}>0$, independent of $\eta\in(0,\bar\eta]$, such that $\nabla J$ is $L_{\bar\eta}$-Lipschitz on $\mathcal{B}$ and the $u$- and $v$-block subproblems are uniformly $\kappa_{\bar\eta}$-strongly convex on $\mathcal{B}$.
\end{lemma}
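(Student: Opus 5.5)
The plan is to build $\mathcal{B}$ as a finite enlargement of the sublevel set $\mathcal{S}_0$, using Lemma~\ref{lem:compact_mod_lineality}. That lemma shows $\mathcal{S}_0=(\mathcal{S}_0\cap\mathcal{W})+\mathcal{N}$ with $\mathcal{S}_0\cap\mathcal{W}$ compact; in particular, $A(\mathcal{S}_0)$ is a compact subset of $\mathcal{S}^g_{\ell_0}$. The key observation is that every quantity in the iteration is invariant under translation by $\mathcal{N}$. Indeed, $J$ and $\nabla J=A^\top(\exp(Az/\gamma)-\mathrm{vec}(\hat X))$ depend only on $Az$. Moreover, $\mathcal{Z}+\mathcal{N}=\mathcal{Z}$ because $\mathcal{N}\subseteq L_\mathcal{Z}$. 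All the desired bounds can therefore be proved on the compact slice and transported along $\mathcal{N}$.

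I will proceed in the following order. \emph{Step 1 (the $u$,$v$ points).} The exact block minimizations do not increase $J$, so $z_u^j,z_v^j\in\mathcal{S}_0$ whenever $z^j\in\mathcal{S}_0$. \emph{Step 2 (gradient bound).} On $\mathcal{S}_0$ we have $\|\nabla J(z)\|\le\|A\|\,\sup_{y\in\mathcal{S}^g_{\ell_0}}\|\exp(y/\gamma)-\mathrm{vec}(\hat X)\|=:G$, which is finite by Lemma~\ref{lem:g}. \emph{Step 3 (the $C^{j+}$ point).} Nonexpansiveness of $\Proj_\mathcal{C}$ together with $C^j\in\mathcal{C}$ gives $\|C^{j+}-C^j\|\le\eta\|\nabla_C J(z_v^j)\|\le\bar\eta G$. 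Combining this with the bounded change in the $u,v$ blocks, I expect $\|r^j\|=\|z^{j+}-z^j\|\le\rho$ for some $\rho$ depending on $\bar\eta$ only. The $u,v$ increments are controlled because, modulo $\mathcal{N}$, both endpoints lie in the compact slice, so their $\mathcal{W}$-components are bounded by $\operatorname{diam}(\mathcal{S}_0\cap\mathcal{W})$. For the $\mathcal{N}$-component I only need membership in a $\mathcal{N}$-invariant set, not a norm bound. \emph{Step 4 (the extrapolated point).} Lemma~\ref{lem:Anderson_bound} gives $\|H^jr^j\|\le\sqrt{2(1+c_2)}\,\rho$, and $\Proj_\mathcal{Z}$ is nonexpansive with $z^j\in\mathcal{Z}$, so $\|z^{j+1}-z^j\|\le\sqrt{2(1+c_2)}\,\rho$.

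With $D:=\max\{\rho,\sqrt{2(1+c_2)}\rho\}$, I would define
\[
\mathcal{B}:=\mathcal{Z}\cap\big((\mathcal{S}_0\cap\mathcal{W})+\bar{\mathbb{B}}_{D'}+\mathcal{N}\big),
\]
where $\bar{\mathbb{B}}_{D'}$ is the closed ball of radius $D'$, and $D'$ is $D$ plus whatever bound Step 3 needs for the $u,v$ increments. Convexity is clear, and so is $\mathcal{B}+\mathcal{N}=\mathcal{B}$, since $\mathcal{Z}$ is $\mathcal{N}$-invariant. The slice $\mathcal{B}\cap\mathcal{W}$ is contained in $\Pi_\mathcal{W}$ of a compact set plus a ball, and it is closed, hence compact. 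Every listed point is within $D'$ of a point of $\mathcal{S}_0$, so it lies in $\mathcal{B}$.

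For the constants, $\nabla^2J=\gamma^{-1}A^\top D(z)A$, and $D(z)$ depends on $Az$. Since $A(\mathcal{B})=A(\mathcal{B}\cap\mathcal{W})$ is compact, the entries of $\exp(Az/\gamma)$ are bounded above and below by positive constants on the convex set $\mathcal{B}$. This gives a Lipschitz constant $L_{\bar\eta}=\gamma^{-1}\|A\|^2\max_{\mathcal{B}} \|D(z)\|$ by the mean value theorem. The $u^l$-block Hessian is $\gamma^{-1}\diag(\sum_j\exp((u_i^l+v_j^l-C_{ij})/\gamma))$, which is bounded below by the minimal exponential entry on $\mathcal{B}$ times $n/\gamma$; the same holds for $v$. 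This yields $\kappa_{\bar\eta}$. Both constants depend only on $\mathcal{B}$, hence only on $\bar\eta$.

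The main obstacle is Step 3, bounding the $u,v$-block jumps uniformly. The Sinkhorn-type updates \eqref{eq:update_u}--\eqref{eq:update_v} can shift $u^l$ by a constant and $v^l$ oppositely, and such shifts lie in $\ker(A)$. I would first try to show these shift directions lie in $\mathcal{N}$: they are in $\ker(A)$ and in $L_\mathcal{Z}$ because they do not touch the $C$-block. Any remaining displacement then has bounded $\mathcal{W}$-component, since both endpoints lie in $\mathcal{S}_0$. If that fails, I would bound $u^{l,j+}$ directly from the explicit log-sum-exp formula in terms of $v^{l,j}$ and $C^j$, after normalizing by an $\mathcal{N}$-translation.
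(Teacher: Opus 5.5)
Your construction of $\mathcal{B}$, the $C$-step bound, the use of Lemma~\ref{lem:Anderson_bound}, and the derivation of $L_{\bar\eta},\kappa_{\bar\eta}$ from compactness of $A(\mathcal{B}\cap\mathcal{W})$ all match the paper. Your Step~1 is slightly cleaner than the paper's route. It puts $z_u^j,z_v^j$ in $\mathcal{S}_0$, and $z^{j+}$ differs from $z_v^j$ only in the $C$-block, so those three points land in $\mathcal{B}$ with no bound on the $u,v$ jumps. The gap is in the one place such a bound is needed, which is the step you flag. Step~4 uses $\|r^j\|\le\rho$ for the \emph{full} norm of $r^j$. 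Lemma~\ref{lem:Anderson_bound} bounds $\|H^jr^j\|$ only by $\|r^j\|$, and $\Gamma_j^*$ is fitted to the full vectors $r^j,R^j,Z^j$. So nothing prevents a large $\mathcal{N}$-component of $r^j$ from becoming a large $\mathcal{W}$-component of $z^{j+1}-z^j$. Your Step~3 argument controls only $\Pi_\mathcal{W}(z_u^j-z^j)$, via $\operatorname{diam}(\mathcal{S}_0\cap\mathcal{W})$, and explicitly says no norm bound on the $\mathcal{N}$-component is needed; that contradicts what Step~4 consumes. This component is really present. The $u$-update moves only the $u$-block, so $z_u^j-z^j$ generically has a nonzero projection onto the direction with $u^l$-block $\mathbf{1}_n$, $v^l$-block $-\mathbf{1}_n$, and all else zero, and that direction lies in $\mathcal{N}$. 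Your first repair, showing such shifts lie in $\mathcal{N}$, pins down their direction but not their size, so it cannot close the gap.

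The missing idea, which is what the paper uses, is that the \emph{increment} is a function of $y=Az$ alone: $u_i^{l,+}-u_i^l=\gamma\log\big(\mu_i^l/\sum_j\exp(y_{lij}/\gamma)\big)$. Since $y$ lies in the compact set $\mathcal{S}^g_{\ell_0}$, $\|u^+-u\|$ is uniformly bounded. Then $Az_u=y+(u^+-u)\oplus0$ ranges over a compact set, and the analogous formula bounds $v^+-v$. Together with your $C$-step bound, this gives $\|r^j\|\le R_0(\bar\eta)$ for every $z^j\in\mathcal{S}_0$, and the rest of your argument goes through as written. Your fallback (normalize by an $\mathcal{N}$-translation, then apply the log-sum-exp formula) amounts to the same thing, but only once you add the equivariance $u^+(z+d)=u^+(z)+d_u$, and likewise for $v$, for $d\in\ker(A)$. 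Without it, bounding $u^{l,j+}$ at the normalized point says nothing about $u^{l,j+}-u^{l,j}$ at the actual iterate, which may drift without bound along $\mathcal{N}$.
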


\begin{proof}
See Section~\ref{app:localization} of the supplementary material.
\end{proof}

Fix $\bar\eta>0$ and let $L:=L_{\bar\eta}$ and $\kappa:=\kappa_{\bar\eta}$ be the constants from Lemma~\ref{lem:localization}. In what follows, the projected-gradient step size is chosen so that
\begin{equation}\label{eq:eta_choice}
    0<\eta<\min\left\{\bar\eta,\frac{2}{L}\right\}.
\end{equation}

\begin{lemma}
\label{lem:sufficient_decrease}
If $z^k\in\mathcal{S}_0$, then
\[
    J(z^k)-J(z^{k+})
    \geq c_1\|z^k-z^{k+}\|^2
    =c_1\|r^k\|^2,
\]
where $c_1:=\min\left\{\kappa/2,1/\eta-L/2\right\}>0$.
\end{lemma}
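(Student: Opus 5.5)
We split the base BCD step into its three block updates and telescope the decrease:
\[
J(z^k)-J(z^{k+}) = \big[J(z^k)-J(z_u^k)\big]+\big[J(z_u^k)-J(z_v^k)\big]+\big[J(z_v^k)-J(z^{k+})\big].
\]
By Lemma~\ref{lem:localization}, since $z^k\in\mathcal{S}_0$, the points $z^k,z_u^k,z_v^k,z^{k+}$ all lie in the convex set $\mathcal{B}$. Segments between consecutive points therefore stay in $\mathcal{B}$, so the constants $L$ and $\kappa$ are valid along each segment. We then bound each bracket separately and sum, using the orthogonality of the blocks:
\[
\|r^k\|^2=\|u^{k+}-u^k\|^2+\|v^{k+}-v^k\|^2+\|C^{k+}-C^k\|^2.
\]

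\emph{The $u$- and $v$-blocks.} The map $u\mapsto J(u,v^k,C^k)$ is $\kappa$-strongly convex on the relevant segment, and $\nabla_uJ(z_u^k)=0$. Strong convexity around a stationary point gives
\[
J(z^k)-J(z_u^k)\ge \tfrac{\kappa}{2}\|u^k-u^{k+}\|^2.
\]
The same argument for $v\mapsto J(u^{k+},v,C^k)$, using $\nabla_vJ(z_v^k)=0$, gives
\[
J(z_u^k)-J(z_v^k)\ge\tfrac{\kappa}{2}\|v^k-v^{k+}\|^2.
\]

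\emph{The $C$-block.} Here we use the standard projected-gradient descent estimate. Because $\nabla J$ is $L$-Lipschitz on $\mathcal{B}$ and the segment from $z_v^k$ to $z^{k+}$ lies in $\mathcal{B}$, the descent lemma gives
\[
J(z^{k+})\le J(z_v^k)+\langle\nabla_CJ(z_v^k),C^{k+}-C^k\rangle+\tfrac L2\|C^{k+}-C^k\|^2 .
\]
The projection characterization of $C^{k+}=\Proj_\mathcal{C}(C^k-\eta\nabla_CJ(z_v^k))$, tested at the feasible point $C^k\in\mathcal{C}$, yields
\[
\langle C^k-\eta\nabla_CJ(z_v^k)-C^{k+},\,C^k-C^{k+}\rangle\le 0,
\]
that is, $\langle\nabla_CJ(z_v^k),C^{k+}-C^k\rangle\le-\tfrac1\eta\|C^{k+}-C^k\|^2$. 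Combining the two estimates,
\[
J(z_v^k)-J(z^{k+})\ge(\tfrac1\eta-\tfrac L2)\|C^{k+}-C^k\|^2,
\]
and this coefficient is positive by \eqref{eq:eta_choice}.

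\emph{Conclusion and main obstacle.} Summing the three bounds and taking the minimum coefficient gives the claim with $c_1=\min\{\kappa/2,1/\eta-L/2\}$. The main obstacle is justifying that the local constants $L$ and $\kappa$ apply. Both strong convexity and the Lipschitz bound hold only on $\mathcal{B}$, not globally, because of the exponential terms. The plan handles this by invoking Lemma~\ref{lem:localization} with $k$ replaced by the current index; this requires only the current iterate in $\mathcal{S}_0$ for the points $z_u^k$, $z_v^k$, $z^{k+}$. Convexity of $\mathcal{B}$ then places the connecting segments inside $\mathcal{B}$. Note that the exact block minimizations already decrease $J$, so $z_u^k,z_v^k\in\mathcal{S}_0$ automatically, which supports this localization.
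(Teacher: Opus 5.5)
Your proposal is correct and follows essentially the same route as the paper: you telescope over the three blocks, apply strong convexity at the exact block minimizers for $u$ and $v$, and combine the descent lemma with the projection inequality tested at $C^k$ for the $C$-block. Your explicit appeal to Lemma~\ref{lem:localization}, together with convexity of $\mathcal{B}$, to justify using $L$ and $\kappa$ along each connecting segment is left implicit in the paper's proof but does not change the argument.
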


\begin{proof}
By exact minimization of the $u$- and $v$-blocks and their uniform strong convexity,
\[
J(z^k)-J(z_u^k) \geq\frac{\kappa}{2}\|u^k-u^{k+}\|^2,\text{ and }
J(z_u^k)-J(z_v^k) \geq\frac{\kappa}{2}\|v^k-v^{k+}\|^2.
\]
For the $C$-block, the descent lemma gives
\[
    J(z^{k+})
    \leq J(z_v^k)
    +\langle\nabla_CJ(z_v^k),C^{k+}-C^k\rangle
    +\frac{L}{2}\|C^{k+}-C^k\|^2.
\]
The projection optimality condition for \eqref{eq:update_C}, evaluated at $C=C^k$, yields
\[
    \langle\nabla_CJ(z_v^k),C^{k+}-C^k\rangle
    \leq-\frac{1}{\eta}\|C^{k+}-C^k\|^2.
\]
Therefore
\[
    J(z_v^k)-J(z^{k+})
    \geq\left(\frac{1}{\eta}-\frac{L}{2}\right)
      \|C^{k+}-C^k\|^2.
\]
Summing the three block decreases proves the result.
\end{proof}

\begin{lemma}
\label{lem:error_bound}
If $z^k\in\mathcal{S}_0$, then
\[
    \|r^k\|^2
    \geq w\,\operatorname{dist}(0,\partial F(z^{k+}))^2,
    \qquad
    w:=\left(3L+\frac{1}{\eta}\right)^{-2}>0.
\]
\end{lemma}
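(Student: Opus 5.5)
The plan is to build an explicit element of $\partial F(z^{k+})$ and bound its norm by a multiple of $\|r^k\|$. Since $F=J+\mathbf{1}_\mathcal{C}(C)$ and $J$ is smooth, $\partial F(z^{k+})=\nabla J(z^{k+})+\{0\}\times\{0\}\times N_\mathcal{C}(C^{k+})$. We therefore pick the normal-cone component that the projected-gradient step \eqref{eq:update_C} supplies. The optimality condition of that step reads
\[
    \frac{1}{\eta}\bigl(C^k-C^{k+}\bigr)-\nabla_CJ(z_v^k)\in N_\mathcal{C}(C^{k+}).
\]
Adding $\nabla_CJ(z^{k+})$ gives a $C$-component $\xi_C:=\nabla_CJ(z^{k+})-\nabla_CJ(z_v^k)+\frac{1}{\eta}(C^k-C^{k+})$. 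The $u$- and $v$-components of the subgradient are simply $\nabla_uJ(z^{k+})$ and $\nabla_vJ(z^{k+})$.

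Next we bound each block using the exact block optimality $\nabla_uJ(z_u^k)=0$ and $\nabla_vJ(z_v^k)=0$, together with the Lipschitz constant $L$ of $\nabla J$ on $\mathcal{B}$ from Lemma~\ref{lem:localization}. That lemma places $z_u^k$, $z_v^k$, $z^{k+}$ and $z^k$ in $\mathcal{B}$ whenever $z^k\in\mathcal{S}_0$. One point needs care: the lemma is stated under the hypothesis that $z^0,\dots,z^k\in\mathcal{S}_0$. I would either note that the localization argument only uses $z^k\in\mathcal{S}_0$, or phrase the result along the same trajectory hypothesis; both routes give the same constants.

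The three block estimates are then:
\[
    \|\nabla_uJ(z^{k+})\|=\|\nabla_uJ(z^{k+})-\nabla_uJ(z_u^k)\|\le L\|z^{k+}-z_u^k\|\le L\|r^k\|,
\]
since $z^{k+}-z_u^k=(0,v^{k+}-v^k,C^{k+}-C^k)$ is a subvector of $r^k$. Similarly,
\[
    \|\nabla_vJ(z^{k+})\|\le L\|z^{k+}-z_v^k\|=L\|C^{k+}-C^k\|\le L\|r^k\|,
\]
and
\[
    \|\xi_C\|\le L\|C^{k+}-C^k\|+\frac1\eta\|C^{k+}-C^k\|\le \Bigl(L+\frac1\eta\Bigr)\|r^k\|.
\]

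Finally we combine the blocks. Using the crude inequality $\|(a,b,c)\|\le\|a\|+\|b\|+\|c\|$,
\[
    \operatorname{dist}(0,\partial F(z^{k+}))\le\Bigl(3L+\frac1\eta\Bigr)\|r^k\|.
\]
Squaring this gives the claim with $w=(3L+1/\eta)^{-2}$.

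The only delicate step is ensuring the Lipschitz bound applies on segments between these points. Since $\mathcal{B}$ is convex and contains all the points involved, the segments lie in $\mathcal{B}$ and the constant $L$ is valid there. Everything else is routine.
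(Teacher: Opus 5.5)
Your proposal is correct and follows essentially the same route as the paper: the same subgradient element $\bigl(\nabla_uJ(z^{k+}),\nabla_vJ(z^{k+}),\nabla_CJ(z^{k+})+\xi^k\bigr)$, where $\xi^k\in N_\mathcal{C}(C^{k+})$ is the normal-cone element supplied by the projected-gradient step, then blockwise $L$-Lipschitz bounds using $\nabla_uJ(z_u^k)=0$ and $\nabla_vJ(z_v^k)=0$, and the triangle inequality giving the constant $3L+1/\eta$. Your side remark is also accurate: the localization argument places $z_u^k$, $z_v^k$ and $z^{k+}$ in $\mathcal{B}$ using only $z^k\in\mathcal{S}_0$, which is exactly how the supplementary proof obtains those bounds.
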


\begin{proof}
The optimality condition for the $C$-update implies that there exists $\xi^k\in N_\mathcal{C}(C^{k+})$ such that $\xi^k+\nabla_CJ(z_v^k)+\eta^{-1}(C^{k+}-C^k)=0$.
Thus
\[
    g^{k+}:=
    \big(\nabla_uJ(z^{k+}),\nabla_vJ(z^{k+}),
    \nabla_CJ(z^{k+})+\xi^k\big)
    \in\partial F(z^{k+}).
\]
Using $\nabla_uJ(z_u^k)=0$, $\nabla_vJ(z_v^k)=0$, and the $L$-Lipschitz continuity of $\nabla J$ on the localized set,
\begin{align*}
    \|\nabla_uJ(z^{k+})\|
    &\leq L\|z^{k+}-z_u^k\|
    \leq L\|r^k\|,\\
    \|\nabla_vJ(z^{k+})\|
    &\leq L\|z^{k+}-z_v^k\|
    \leq L\|r^k\|,\\
    \|\nabla_CJ(z^{k+})+\xi^k\|
    &\leq\left(L+\frac{1}{\eta}\right)\|C^{k+}-C^k\|
    \leq\left(L+\frac{1}{\eta}\right)\|r^k\|.
\end{align*}
Consequently,
\[
    \operatorname{dist}(0,\partial F(z^{k+}))
    \leq\|g^{k+}\|
    \leq\left(3L+\frac{1}{\eta}\right)\|r^k\|,
\]
which proves the claim.
\end{proof}

We next establish quadratic growth. The statement below is written on sets that are compact modulo the lineality subspace, which is the form needed for the present problem.

\begin{theorem}
\label{thm:qgc}
Let $\mathcal{Z}$ be closed and convex and suppose $\mathcal{Z}^*\neq\emptyset$. Let $\mathcal{N}\subseteq L_\mathcal{Z}\cap\ker(A)$ be a linear subspace, let $\mathcal{W}=\mathcal{N}^\perp$, and let $\mathcal{B}\subseteq\mathcal{Z}$ be convex and satisfy $\mathcal{B}+\mathcal{N}=\mathcal{B}$ with $\mathcal{B}\cap\mathcal{W}$ compact. Choose any $z^*\in\mathcal{Z}^*$. Assume either that $\mathcal{Z}$ is polyhedral or that $\operatorname{ri}(\mathcal{Z})\cap(z^*+\ker(A))\neq\emptyset$.
Then there exists $\sigma>0$ such that, for all $z\in\mathcal{B}$,
\begin{equation}
    J(z)-J^*
    \geq\frac{\sigma}{2}\operatorname{dist}(z,\mathcal{Z}^*)^2.
    \label{eq:qgc}
\end{equation}
\end{theorem}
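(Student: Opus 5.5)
We split quadratic growth into a strong-convexity estimate in the image space $y=Az$ and a Hoffman-type error bound relating $\operatorname{dist}(z,\mathcal{Z}^*)$ to $\|Az-y^*\|$, where $y^*:=Az^*$. By Proposition~\ref{prop:optimal_set_structure}, $y^*$ is common to all optimal solutions and $\mathcal{Z}^*=(z^*+\ker(A))\cap\mathcal{Z}$.

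\textbf{Step 1: first-order growth in $y$.} Since $\mathcal{B}+\mathcal{N}=\mathcal{B}$, $\mathcal{N}\subseteq\ker(A)$, and $\mathcal{B}\cap\mathcal{W}$ is compact, the image $A(\mathcal{B})=A(\mathcal{B}\cap\mathcal{W})$ is compact. Hence $g$ is $\sigma_g$-strongly convex on the convex hull of $A(\mathcal{B})\cup\{y^*\}$, with $\sigma_g=\gamma^{-1}\min\exp(y_q/\gamma)$ over that compact set. Optimality of $z^*$ for the convex problem gives $\langle\nabla g(y^*),A(z-z^*)\rangle=\langle\nabla J(z^*),z-z^*\rangle\geq0$ for all $z\in\mathcal{Z}$. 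Strong convexity then yields
\[
J(z)-J^*\geq\frac{\sigma_g}{2}\|Az-y^*\|^2,\qquad z\in\mathcal{B}.
\]

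\textbf{Step 2: error bound $\operatorname{dist}(z,\mathcal{Z}^*)\leq\tau\|Az-y^*\|$ for $z\in\mathcal{B}$.} This is the main obstacle. The set $\mathcal{Z}^*$ is the intersection of the closed convex set $\mathcal{Z}$ with the affine set $\{z:Az=y^*\}$, and for $z\in\mathcal{Z}$ we need $\operatorname{dist}(z,\mathcal{Z}^*)\lesssim\operatorname{dist}(z,z^*+\ker A)\leq\|A^\dagger\|\,\|Az-y^*\|$.

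\emph{Polyhedral case.} Write $\mathcal{Z}^*=\{z:Az=y^*,\ Gz\leq h\}$. Hoffman's lemma gives $\operatorname{dist}(z,\mathcal{Z}^*)\leq\tau(\|Az-y^*\|+\|(Gz-h)_+\|)$, and the second term vanishes for $z\in\mathcal{Z}$. This case needs no boundedness at all.

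\emph{Relative-interior case.} The condition $\operatorname{ri}(\mathcal{Z})\cap(z^*+\ker A)\neq\emptyset$ is the standard constraint qualification for bounded linear regularity of two closed convex sets, one of them affine (Bauschke--Borwein, or Bauschke--Borwein--Li). It gives: for every bounded set $\mathcal{K}$ there is $\tau_{\mathcal{K}}$ with $\operatorname{dist}(z,\mathcal{Z}\cap(z^*+\ker A))\leq\tau_{\mathcal{K}}\max\{\operatorname{dist}(z,\mathcal{Z}),\operatorname{dist}(z,z^*+\ker A)\}$ for $z\in\mathcal{K}$. If I wanted to prove this directly, I would pick $\bar z\in\operatorname{ri}(\mathcal{Z})\cap(z^*+\ker A)$ and, for $z\in\mathcal{Z}$, take $p=\Pi_{z^*+\ker A}z$. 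A convex combination $(1-t)p+t\bar z$ with $t\propto\|z-p\|$ then stays in $\mathcal{Z}$, because a relative ball around $\bar z$ in $\operatorname{aff}\mathcal{Z}$ absorbs the deviation. The deviation stays within $\operatorname{aff}\mathcal{Z}$ since $\mathcal{Z}\subseteq\bar z+\operatorname{span}(\mathcal{Z}-\mathcal{Z})$ and $\ker A$ intersects it suitably. The constant depends on the diameter, which is why boundedness is needed.

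\emph{Removing the unboundedness of $\mathcal{B}$.} Since $\mathcal{N}\subseteq L_\mathcal{Z}\cap\ker(A)$, we have $\mathcal{Z}^*+\mathcal{N}=\mathcal{Z}^*$, so $\operatorname{dist}(z,\mathcal{Z}^*)=\operatorname{dist}(\Pi_\mathcal{W}z,\mathcal{Z}^*)$. Likewise $A z=A\Pi_\mathcal{W}z$, and $\Pi_\mathcal{W}z\in\mathcal{B}\cap\mathcal{W}$, which is compact. So it suffices to apply bounded linear regularity on $\mathcal{K}=\mathcal{B}\cap\mathcal{W}$.

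\textbf{Step 3: combine.} Steps 1 and 2 give $J(z)-J^*\geq\frac{\sigma_g}{2\tau^2}\operatorname{dist}(z,\mathcal{Z}^*)^2$ on $\mathcal{B}$, so the theorem holds with $\sigma:=\sigma_g/\tau^2$.
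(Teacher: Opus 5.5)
Your proposal is correct and follows essentially the paper's proof. A lower curvature bound for $g$ on the compact set spanned by $A(\mathcal{B}\cap\mathcal{W})$ and the common optimal image $Az^*$ gives $J(z)-J^*\geq c\,\|Az-Az^*\|^2$. Bounded linear regularity of $\{z^*+\ker(A),\mathcal{Z}\}$ on the compact slice (Hoffman's lemma in the polyhedral case), together with $\operatorname{dist}(z,z^*+\ker(A))\leq\|A^\dagger\|\,\|Az-Az^*\|$ and the $\mathcal{N}$-invariance reduction, then converts this into quadratic growth.

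The only loose point is your optional direct sketch of bounded linear regularity. There, the projection of $z$ onto $z^*+\ker(A)$ need not lie in $\operatorname{aff}\mathcal{Z}$, so you should first project onto $(z^*+\ker(A))\cap\operatorname{aff}\mathcal{Z}$ and apply Hoffman to the two affine sets. The cited Bauschke--Borwein result already covers this case, so the main argument is unaffected.
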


\begin{proof}
Because $\mathcal{N}\subseteq L_\mathcal{Z}\cap\ker(A)$, both $J$ and the sets $\mathcal{Z}$ and $\mathcal{Z}^*$ are invariant under translations by $\mathcal{N}$. Hence
\[
    J(z)=J(\Pi_\mathcal{W}z),
    \qquad
    \operatorname{dist}(z,\mathcal{Z}^*)
    =\operatorname{dist}(\Pi_\mathcal{W}z,\mathcal{Z}^*).
\]
It therefore suffices to prove the estimate for $z\in\mathcal{B}\cap\mathcal{W}$, which is compact.

Let $z_p=\Proj_{\mathcal{Z}^*}(z)$ and $v=z-z_p$. By optimality of $z_p$ over the convex feasible set, $\langle\nabla J(z_p),v\rangle\geq0$.
Applying Taylor's theorem to the scalar function $t\mapsto J(z_p+tv)$ gives, for some $\tilde z$ on the segment $[z_p,z]$,
\[
    J(z)-J^*
    \geq\frac12v^T\nabla^2J(\tilde z)v
    =\frac{1}{2\gamma}\|D(\tilde z)^{1/2}Av\|^2.
\]
By Proposition~\ref{prop:optimal_set_structure}, all optimal solutions have the same image $Az_p$. Since $A(\mathcal{B}\cap\mathcal{W})$ is compact, the vectors $A\tilde z$, which lie on segments joining this compact set to the fixed optimal image, remain in a compact set. Hence there exists $D_{\min}>0$ such that
\[
    J(z)-J^*
    \geq\frac{D_{\min}}{2\gamma}\|Av\|^2.
\]

Let $S_{aff}:=z^*+\ker(A)$. Proposition~\ref{prop:optimal_set_structure} gives $\mathcal{Z}^*=S_{aff}\cap\mathcal{Z}$.
Under either assumption of the theorem, the pair $\{S_{aff},\mathcal{Z}\}$ is boundedly linearly regular \cite{bauschke1996projection,bauschke1999strong}. Therefore, on the compact set $\mathcal{B}\cap\mathcal{W}$ there exists $\kappa_{\rm BLR}>0$ such that, since $z\in\mathcal{Z}$,
\[
    \|v\|
    =\operatorname{dist}(z,\mathcal{Z}^*)
    \leq\kappa_{\rm BLR}\operatorname{dist}(z,S_{aff}).
\]
Since $S_{aff}=\{y:Ay=Az_p\}$,
\[
    \operatorname{dist}(z,S_{aff})
    =\|A^\dagger(Az-Az_p)\|
    \leq\frac{1}{\sigma_{\min}^+(A)}\|Av\|.
\]
Consequently,
\[
    \|Av\|
    \geq\frac{\sigma_{\min}^+(A)}{\kappa_{\rm BLR}}\|v\|.
\]
Combining the preceding estimates proves \eqref{eq:qgc} with
\[
    \sigma
    =\frac{D_{\min}(\sigma_{\min}^+(A))^2}
    {\gamma\kappa_{\rm BLR}^2}>0.
\]
\end{proof}

The Polyak--{\L}ojasiewicz (PL) condition \cite{polyak1963gradient} admits a subgradient formulation for nonsmooth functions \cite[Section~3]{liao2024error}. For the proper closed convex function $F$, the standard quadratic-growth-to-PL implication \cite[Theorem~3.1]{liao2024error} applies pointwise on $\mathcal{B}$ and yields the following result.

\begin{lemma}[Subgradient PL condition]\label{lem:qgc_pl}
Suppose \eqref{eq:qgc} holds on $\mathcal{B}$. Then, for every $z\in\mathcal{B}$,
\begin{equation}
    F(z)-F^*
    \leq\frac{2}{\sigma}\operatorname{dist}(0,\partial F(z))^2.
    \label{eq:qgc_subgrad}
\end{equation}
\end{lemma}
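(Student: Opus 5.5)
The plan is to derive \eqref{eq:qgc_subgrad} directly from convexity of $F$ combined with the quadratic growth \eqref{eq:qgc}, which is the standard QG$\Rightarrow$PL argument. Fix $z\in\mathcal{B}$. If $\partial F(z)=\emptyset$ the right-hand side is $+\infty$ and there is nothing to prove. Since $\mathcal{B}\subseteq\mathcal{Z}$, $F(z)=J(z)$ is finite. Take any $g\in\partial F(z)$ and let $z_p:=\Proj_{\mathcal{Z}^*}(z)$. The set $\mathcal{Z}^*$ is nonempty, closed and convex by Proposition~\ref{prop:optimal_set_structure}, so $z_p$ is well defined, and $F(z_p)=F^*$.

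The subgradient inequality at $z$, evaluated at $z_p$, gives
\[
F^*=F(z_p)\geq F(z)+\langle g,z_p-z\rangle ,
\]
and therefore, by Cauchy--Schwarz,
\[
F(z)-F^*\leq \|g\|\,\|z-z_p\|=\|g\|\operatorname{dist}(z,\mathcal{Z}^*).
\]
Write $\Delta:=F(z)-F^*\geq0$. Quadratic growth \eqref{eq:qgc} applies because $z\in\mathcal{B}$, and it gives $\operatorname{dist}(z,\mathcal{Z}^*)\leq\sqrt{2\Delta/\sigma}$. Hence
\[
\Delta\leq\|g\|\sqrt{2\Delta/\sigma}.
\]
If $\Delta=0$ the claim is trivial. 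Otherwise we divide by $\sqrt{\Delta}$ and square, obtaining $\Delta\leq \tfrac{2}{\sigma}\|g\|^2$. Taking the infimum over $g\in\partial F(z)$ yields \eqref{eq:qgc_subgrad}; the infimum is attained because $\partial F(z)$ is closed and convex.

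The only subtlety is that quadratic growth holds only on $\mathcal{B}$, not globally. The argument above never requires it anywhere except at the single point $z$: the point $z_p$ enters only through the global subgradient inequality, which holds for the convex function $F$ on the whole space. So the localization causes no difficulty. The factor $2/\sigma$ matches the constant stated, so I expect no further obstacle. This is exactly the pointwise application of \cite[Theorem~3.1]{liao2024error} that the paper cites.
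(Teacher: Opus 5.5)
Your proof is correct and is the standard convex QG$\Rightarrow$PL argument that the paper invokes by citing \cite[Theorem~3.1]{liao2024error} pointwise on $\mathcal{B}$. Your remark that quadratic growth is needed only at $z$ itself, while the subgradient inequality toward $\Proj_{\mathcal{Z}^*}(z)$ holds globally, is exactly what justifies that pointwise application; one small nit is that nonemptiness of $\mathcal{Z}^*$ comes from the standing hypotheses (as in Theorem~\ref{thm:qgc}), not from Proposition~\ref{prop:optimal_set_structure}, which assumes it.
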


The relative-interior assumption provides the missing control of the smooth gradient along feasible directions.

\begin{lemma}\label{lem:tangential_gradient}
Let $\mathcal{S}:=\operatorname{par}(\mathcal{Z})=\operatorname{span}(\mathcal{Z}-\mathcal{Z})$. Suppose Assumption~\ref{asmp:convergence} holds. Then $\Pi_\mathcal{S}\nabla J(z^*)=0$ for all $z^*\in\mathcal{Z}^*$.
Moreover, for every $z\in\mathcal{B}$,
\begin{equation}
    \|\Pi_\mathcal{S}\nabla J(z)\|
    \leq L\operatorname{dist}(z,\mathcal{Z}^*),
    \label{eq:tangential_gradient_bound}
\end{equation}
where $L$ may be chosen as the Lipschitz constant in Lemma~\ref{lem:localization} after enlarging it if necessary.
\end{lemma}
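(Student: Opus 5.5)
The proof has two parts. First we show that the tangential part of the gradient vanishes on the whole optimal set. Then we derive \eqref{eq:tangential_gradient_bound} from the Lipschitz continuity of $\nabla J$ on $\mathcal{B}$.

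\textbf{Step 1: vanishing at one optimal point.} By Assumption~\ref{asmp:convergence}(3) there exists $\bar z\in\operatorname{ri}(\mathcal{Z})\cap(z^*+\ker(A))$. By Proposition~\ref{prop:optimal_set_structure}, $\bar z\in\mathcal{Z}^*$, because $A\bar z=Az^*$ and $\bar z\in\mathcal{Z}$. The optimality condition for minimizing the smooth convex $J$ over $\mathcal{Z}$ gives $-\nabla J(\bar z)\in N_\mathcal{Z}(\bar z)$. At a relative interior point the normal cone equals $\mathcal{S}^\perp$: for every $d\in\mathcal{S}$ we have $\bar z\pm td\in\mathcal{Z}$ for small $t>0$. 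Hence $\langle\nabla J(\bar z),d\rangle\geq0$ and $\langle\nabla J(\bar z),-d\rangle\geq0$, so $\Pi_\mathcal{S}\nabla J(\bar z)=0$.

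\textbf{Step 2: extending to all of $\mathcal{Z}^*$.} From \eqref{eq:gradient_J}, $\nabla J(z)=A^\top(\exp(Az/\gamma)-\text{vec}(\hat X))$ depends on $z$ only through $Az$. Proposition~\ref{prop:optimal_set_structure} says every $z^*\in\mathcal{Z}^*$ has the same image $Az^*=A\bar z$. Therefore $\nabla J$ is constant on $\mathcal{Z}^*$, and $\Pi_\mathcal{S}\nabla J(z^*)=\Pi_\mathcal{S}\nabla J(\bar z)=0$ for every $z^*\in\mathcal{Z}^*$. This is the first claim.

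\textbf{Step 3: the bound on $\mathcal{B}$.} For $z\in\mathcal{B}$ set $z_p:=\Proj_{\mathcal{Z}^*}(z)$; this exists because $\mathcal{Z}^*$ is nonempty, closed and convex. Using Step 2 and the fact that $\Pi_\mathcal{S}$ is nonexpansive,
\[
\|\Pi_\mathcal{S}\nabla J(z)\|=\|\Pi_\mathcal{S}(\nabla J(z)-\nabla J(z_p))\|\leq\|\nabla J(z)-\nabla J(z_p)\|.
\]
It remains to bound the right-hand side by $L\|z-z_p\|=L\operatorname{dist}(z,\mathcal{Z}^*)$.

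\textbf{Main obstacle.} The point $z_p$, and the segment $[z,z_p]$, need not lie in $\mathcal{B}$, where the Lipschitz constant $L$ was obtained. I would avoid this by working directly with the images under $A$. Write
\[
\nabla J(z)-\nabla J(z_p)=A^\top\bigl(\exp(Az/\gamma)-\exp(Az_p/\gamma)\bigr),
\]
where $Az_p=Az^*$ is a fixed vector. Since $J$ and $\nabla J$ are $\mathcal{N}$-invariant, $A\mathcal{B}=A(\mathcal{B}\cap\mathcal{W})$, which is compact. The segments joining points of this compact set to the fixed point $Az^*$ therefore stay in a compact set $K'$. By the mean value theorem,
\[
\|\nabla J(z)-\nabla J(z_p)\|\leq\|A\|^2\gamma^{-1}\max_{y\in K'}\|\exp(y/\gamma)\|_\infty\,\|z-z_p\|.
\]
This constant does not depend on $\eta$. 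Replacing $L$ by the maximum of itself and this constant gives \eqref{eq:tangential_gradient_bound}, which is the "enlarging if necessary" in the statement.
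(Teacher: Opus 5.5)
Your proof is correct. Steps 1--2 coincide with the paper's argument: an optimal point $\bar z\in\operatorname{ri}(\mathcal{Z})$, the identity $N_\mathcal{Z}(\bar z)=\mathcal{S}^\perp$ there, and constancy of $\nabla J$ on $\mathcal{Z}^*$ because $\nabla J$ depends only on $Az$.

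The only difference is in Step 3, and the \emph{main obstacle} you flag does not arise in the paper. Since $J^*\le J(z^0)$, the projection $z_p$ lies in $\mathcal{Z}^*\subseteq\mathcal{S}_0$. By the construction of $\mathcal{B}$ in the supplement, $\mathcal{S}_0=\widehat{\mathcal{S}}_0+\mathcal{N}\subseteq(\widehat{\mathcal{B}}+\mathcal{N})\cap\mathcal{Z}=\mathcal{B}$. Hence the segment $[z,z_p]$ lies in the convex set $\mathcal{B}$, and the original $L$ from Lemma~\ref{lem:localization} applies with no enlargement. The paper phrases this through canonical representatives in $\mathcal{W}$.

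Your image-space mean-value bound is also valid. It rests on two facts: $A\mathcal{B}=A(\mathcal{B}\cap\mathcal{W})$ is compact, which holds because $\mathcal{N}\subseteq\ker(A)$ and $\mathcal{B}+\mathcal{N}=\mathcal{B}$ (not, strictly, because of the invariance of $J$); and $Az_p=Az^*$ is fixed. This is the same device the paper uses in the proof of Theorem~\ref{thm:qgc}. Its advantage is that it needs only the abstract properties of $\mathcal{B}$ listed in the statement of Lemma~\ref{lem:localization}, which by themselves do not guarantee $\mathcal{Z}^*\subseteq\mathcal{B}$. The cost is a possibly larger $L$, which the statement explicitly permits.
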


\begin{proof}
By Assumption~\ref{asmp:convergence}(3), there exists $\bar z\in\operatorname{ri}(\mathcal{Z})\cap(z^*+\ker(A))$.
Proposition~\ref{prop:optimal_set_structure} implies $\bar z\in\mathcal{Z}^*$. Since $\bar z$ is an optimal solution, $-\nabla J(\bar z)\in N_\mathcal{Z}(\bar z)$. At a relative-interior point of a closed convex set, $N_\mathcal{Z}(\bar z)=\mathcal{S}^\perp$.
Thus $\Pi_\mathcal{S}\nabla J(\bar z)=0$. By Proposition~\ref{prop:optimal_set_structure}, all optimal solutions have the same image under $A$, and hence the same gradient by \eqref{eq:gradient_J}. Therefore $\Pi_\mathcal{S}\nabla J(z^*)=0$ for every $z^*\in\mathcal{Z}^*$.

For $z\in\mathcal{B}$, let $z_p=\Proj_{\mathcal{Z}^*}(z)$. Since $J$ and $\nabla J$ are invariant along $\mathcal{N}$, we may replace $z$ and $z_p$ by their canonical representatives in $\mathcal{W}$. The canonical representative of $z_p$ belongs to the initial sublevel slice $\widehat{\mathcal{S}}_0$, while that of $z$ belongs to the compact localized set. Hence the Lipschitz estimate applies to the segment joining them, and
\[
    \|\Pi_\mathcal{S}\nabla J(z)\|
    =\|\Pi_\mathcal{S}(\nabla J(z)-\nabla J(z_p))\|
    \leq L\|z-z_p\|.
\]
This is \eqref{eq:tangential_gradient_bound}.
\end{proof}

We can now establish the main convergence theorem. For simplicity, assume $\alpha_k=\alpha$, $\beta_k=\beta$, and $\delta_k=\delta$ for all $k$. Define $K_1 :=L(1+\sqrt{c_2})\left(1+\frac{2}{\sigma\sqrt{w}}\right)$, $K_2 :=L(1+c_2)$, and $\mu :=\frac{2L}{\sigma\sqrt{w}}+\frac{L}{2}+\frac{2}{\sigma w}$. 

\begin{theorem}
\label{thm:linear_convergence}
Suppose Assumption~\ref{asmp:convergence} holds. Fix $\bar\eta>0$, let $L=L_{\bar\eta}$ and $\kappa=\kappa_{\bar\eta}$ be as above, and choose $\eta$ according to \eqref{eq:eta_choice}. Assume $0<\beta\leq1$, $0<\alpha<1$, and $\delta^{-1}\leq c_2\beta^2$. Let $c_1$ be the constant in Lemma~\ref{lem:sufficient_decrease}, $w$ the constant in Lemma~\ref{lem:error_bound}, and $\sigma$ the quadratic-growth constant in Theorem~\ref{thm:qgc} on the set $\mathcal{B}$. If
\begin{equation}\label{eq:conv_condition}
    0<\alpha<\frac{c_1}{K_1},
    \qquad
    0<\beta<\frac{c_1-\alpha K_1}{K_2},
    \qquad
    \lambda:=\beta(c_1-\alpha K_1-\beta K_2)<\mu,
\end{equation}
then $\{F(z^k)\}$ converges Q-linearly to $F^*$, and there exists $z^\infty\in\mathcal{Z}^*$ such that $z^k\to z^\infty$ R-linearly.
\end{theorem}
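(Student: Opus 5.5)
We argue by induction that the iterates stay in $\mathcal{S}_0$ and that $e_k:=F(z^k)-F^*$ contracts geometrically. The first step is to control the value of $J$ at the accelerated point $z^{k+1}=\Proj_\mathcal{Z}(z^k+H^kr^k)$ relative to the base point $z^{k+}$. Write $z^k+H^kr^k=z^k+\beta r^k-M^kr^k$. We treat the map $z^k\mapsto\Proj_\mathcal{Z}(z^k+\beta r^k)$ as a relaxed step toward $z^{k+}$, and $M^kr^k$ as a perturbation whose size is bounded by \eqref{eq:M_bound}.

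Concretely, we apply the descent lemma on $\mathcal{B}$ (Lemma~\ref{lem:localization}) from $z^k$ in the direction $d^k:=z^{k+1}-z^k$, using nonexpansiveness of the projection together with $z^k\in\mathcal{Z}$, which gives $\|d^k\|\le\|H^kr^k\|$. This yields
\[
J(z^{k+1})\le J(z^k)+\langle\nabla J(z^k),d^k\rangle+\tfrac L2\|d^k\|^2 .
\]
We split $\langle\nabla J(z^k),d^k\rangle$ into three parts. The first is $\beta\langle\nabla J(z^k),r^k\rangle$, which we bound by $-\beta(J(z^k)-J(z^{k+}))+\beta\tfrac L2\|r^k\|^2$ via convexity and descent; combined with Lemma~\ref{lem:sufficient_decrease}, this produces the $-\beta c_1\|r^k\|^2$ term. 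The second is a term coming from $M^kr^k$. The third is a projection correction, which lies in the normal-cone direction and is controlled using Lemma~\ref{lem:tangential_gradient}: only $\Pi_\mathcal{S}\nabla J$ pairs with feasible displacements, and $\|\Pi_\mathcal{S}\nabla J(z^k)\|\le L\,\mathrm{dist}(z^k,\mathcal{Z}^*)$. For the $M^kr^k$ term, we use $\|\Pi_\mathcal{S}\nabla J(z^k)\|\le\|\Pi_\mathcal{S}\nabla J(z^{k+})\|+L\|r^k\|$. Then Lemma~\ref{lem:error_bound}, Lemma~\ref{lem:qgc_pl}, and Theorem~\ref{thm:qgc} give $\mathrm{dist}(z^{k+},\mathcal{Z}^*)\le\frac{2}{\sigma\sqrt w}\|r^k\|$, and hence this term is at most $\alpha\beta K_1\|r^k\|^2$. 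The quadratic term is at most $\beta^2K_2\|r^k\|^2$ by \eqref{eq:H_bound}. Altogether we aim for
\[
F(z^{k+1})\le F(z^k)-\lambda\|r^k\|^2 ,
\]
and the conditions \eqref{eq:conv_condition} ensure $\lambda>0$. In particular $z^{k+1}\in\mathcal{S}_0$, which closes the induction and lets Lemma~\ref{lem:localization} apply at every step.

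To obtain Q-linear convergence, we need $\|r^k\|^2\ge e_k/\mu$. We bound $e_k\le(F(z^k)-F(z^{k+}))+e_{k+}$. Lemmas~\ref{lem:error_bound} and \ref{lem:qgc_pl} give $e_{k+}\le\frac{2}{\sigma w}\|r^k\|^2$. For the first difference, convexity gives $F(z^k)-F(z^{k+})\le\langle g^k,r^k\rangle$ for a subgradient, which we control by $L\|r^k\|^2/2$ plus a cross term $\|\Pi_\mathcal{S}\nabla J\|\,\|r^k\|\le\frac{2L}{\sigma\sqrt w}\|r^k\|^2$ via the distance bound above. Summing reproduces exactly the constant $\mu$, so $e_k\le\mu\|r^k\|^2$ and therefore $e_{k+1}\le(1-\lambda/\mu)e_k$, where $0<1-\lambda/\mu<1$ because $\lambda<\mu$.

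Finally, R-linear convergence of the iterates follows from $\|z^{k+1}-z^k\|\le\|H^kr^k\|\le\sqrt{2(1+c_2)}\,\beta\|r^k\|$ together with $\|r^k\|^2\le e_k/\lambda$. These give a geometrically summable sequence of steps, so $\{z^k\}$ is Cauchy and converges to some $z^\infty$ at an R-linear rate. Since $F(z^k)\to F^*$ and $F$ is lower semicontinuous, $z^\infty\in\mathcal{Z}^*$.

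The main obstacle is the bookkeeping in the first step. The projection and Anderson perturbation produce the inner product $\langle\nabla J(z^k),M^kr^k\rangle$, but $\nabla J$ is not small at the optimum when the constraints are active. The plan therefore relies on Lemma~\ref{lem:tangential_gradient}: the normal component of the gradient is handled by feasibility of the displacement, while the tangential component is small. Making the displacement precisely tangential after projection is the delicate point. If it fails, the fallback is to compare $z^{k+1}$ with $\Proj_\mathcal{Z}(z^k+\beta r^k)$ using projection firm nonexpansiveness.
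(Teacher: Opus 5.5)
Your plan follows the paper's proof step for step. It applies the descent lemma from $z^k$ along $z^{k+1}-z^k$ and handles the $\beta r^k$ part by convexity plus Lemma~\ref{lem:sufficient_decrease}. The perturbation is controlled through $\Pi_\mathcal{S}\nabla J(z^k)$ and the bound $\operatorname{dist}(z^{k+},\mathcal{Z}^*)\le\frac{2}{\sigma\sqrt w}\|r^k\|$, obtained from Lemmas~\ref{lem:error_bound}, \ref{lem:qgc_pl} and Theorem~\ref{thm:qgc}. The gap estimate $V_k\le\mu\|r^k\|^2$ then gives the contraction, and the Cauchy argument finishes. Your detour via $\|\Pi_\mathcal{S}\nabla J(z^{k+})\|+L\|r^k\|$ yields the same $K_1$ as the paper's triangle inequality on distances, and the lower-semicontinuity argument for $z^\infty\in\mathcal{Z}^*$ is fine. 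However, three bookkeeping points must be fixed for the stated $\lambda$ and $\mu$ to come out, and one of them is the point you flagged yourself.

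First, in the $\beta r^k$ term drop the slack $+\beta\frac L2\|r^k\|^2$. Convexity alone gives $\langle\nabla J(z^k),r^k\rangle\le J(z^{k+})-J(z^k)\le-c_1\|r^k\|^2$. Keeping the slack is fatal: the $u$-block Hessian is a principal block of $\nabla^2J$, so $\kappa\le L$, hence $c_1\le\kappa/2\le L/2$, and your stated bound would give no decrease at all.

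Second, the ``delicate point'' disappears if you do not separate $-M^kr^k$ from the projection correction. Set $y^k:=z^k+\beta r^k=(1-\beta)z^k+\beta z^{k+}$, which lies in $\mathcal{Z}$ by convexity. Then $z^{k+1}=\Proj_\mathcal{Z}(y^k-M^kr^k)$ and $y^k=\Proj_\mathcal{Z}(y^k)$, so nonexpansiveness gives $\|z^{k+1}-y^k\|\le\|M^kr^k\|$. Moreover, $z^{k+1}-y^k\in\mathcal{S}$ simply because both points are feasible, so no tangentiality of the projection step is needed. Your fallback is therefore exactly the paper's argument. Bounding the two pieces separately, each by $\|\Pi_\mathcal{S}\nabla J(z^k)\|\,\|M^kr^k\|$, would produce $2\alpha\beta K_1$, and then the hypothesis $\alpha<c_1/K_1$ would no longer guarantee $\lambda>0$. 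Also, ``lies in the normal-cone direction'' is not what controls that piece. At relative-boundary points $N_\mathcal{Z}$ is strictly larger than $\mathcal{S}^\perp$ and gives no sign information against $\nabla J(z^k)$; only membership in $\mathcal{S}$ and the norm bound matter.

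Third, in the gap estimate, $F(z^k)-F(z^{k+})\le\langle g^k,r^k\rangle$ is not what convexity gives; for a subgradient at $z^{k+}$ it yields a lower bound. The $\frac L2$-plus-cross-term structure that reproduces $\mu$ comes from the descent lemma at $z^{k+}$: $J(z^k)-J(z^{k+})\le\langle\Pi_\mathcal{S}\nabla J(z^{k+}),z^k-z^{k+}\rangle+\frac L2\|r^k\|^2$. Combine this with Lemma~\ref{lem:tangential_gradient} and the distance bound at $z^{k+}$. With these three corrections your proof coincides with the paper's.
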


\begin{proof}
Let $V_k:=F(z^k)-F^*=J(z^k)-F^*$.
We first prove by induction that $z^k\in\mathcal{S}_0$ for all $k$. The claim holds for $k=0$. Suppose it holds up to iteration $k$. Lemma~\ref{lem:localization} then guarantees that all points used below lie in the fixed localized set $\mathcal{B}$, so the constants $L,\kappa,\sigma$ are valid.

\textbf{Step 1: Sufficient decrease of the accelerated iterate.}
Let $p_k:=z^{k+1}-z^k$ and $y^k:=z^k+\beta r^k=(1-\beta)z^k+\beta z^{k+}$.
Since $\mathcal{Z}$ is convex and $0<\beta\leq1$, $y^k\in\mathcal{Z}$. By the descent lemma,
\begin{equation}
    J(z^{k+1})-J(z^k)
    \leq\langle\nabla J(z^k),p_k\rangle
    +\frac{L}{2}\|p_k\|^2.
    \label{eq:descent_lemma_aa}
\end{equation}
Decompose $p_k=\beta r^k+(z^{k+1}-y^k)$.
By Lemma~\ref{lem:sufficient_decrease} and convexity of $J$,
\[
    \langle\nabla J(z^k),r^k\rangle
    \leq J(z^{k+})-J(z^k)
    \leq-c_1\|r^k\|^2,
\]
and hence
\begin{equation}
    \langle\nabla J(z^k),\beta r^k\rangle
    \leq-\beta c_1\|r^k\|^2.
    \label{eq:base_direction_decrease}
\end{equation}

We next control the Anderson perturbation. Since $z^{k+1},y^k\in\mathcal{Z}$, $z^{k+1}-y^k\in\mathcal{S}:=\operatorname{par}(\mathcal{Z})$.
Thus
\[
    \langle\nabla J(z^k),z^{k+1}-y^k\rangle
    =\langle\Pi_\mathcal{S}\nabla J(z^k),z^{k+1}-y^k\rangle.
\]
Lemma~\ref{lem:error_bound} and Lemma~\ref{lem:qgc_pl} give
\begin{equation}
    F(z^{k+})-F^*
    \leq\frac{2}{\sigma w}\|r^k\|^2.
    \label{eq:gap_at_base_point}
\end{equation}
Combining this with quadratic growth yields
\[
    \operatorname{dist}(z^{k+},\mathcal{Z}^*)
    \leq\frac{2}{\sigma\sqrt{w}}\|r^k\|,
\]
and therefore
\begin{equation}
    \operatorname{dist}(z^k,\mathcal{Z}^*)
    \leq\left(1+\frac{2}{\sigma\sqrt{w}}\right)\|r^k\|.
    \label{eq:dist_zk_residual}
\end{equation}
By Lemma~\ref{lem:tangential_gradient},
\[
    \|\Pi_\mathcal{S}\nabla J(z^k)\|
    \leq L\left(1+\frac{2}{\sigma\sqrt{w}}\right)\|r^k\|.
\]
Moreover, projection nonexpansiveness and \eqref{eq:M_bound} imply
\[
    \|z^{k+1}-y^k\|
    \leq\|M^kr^k\|
    \leq\alpha\beta(1+\sqrt{c_2})\|r^k\|.
\]
Consequently,
\begin{equation}
    \left|\langle\nabla J(z^k),z^{k+1}-y^k\rangle\right|
    \leq\alpha\beta K_1\|r^k\|^2.
    \label{eq:AA_inner_bound}
\end{equation}
Finally, by projection nonexpansiveness and \eqref{eq:H_bound},
\[
    \|p_k\|^2
    \leq\|H^kr^k\|^2
    \leq2(1+c_2)\beta^2\|r^k\|^2,
\]
so
\[
    \frac{L}{2}\|p_k\|^2
    \leq\beta^2K_2\|r^k\|^2.
\]
Substituting these estimates into \eqref{eq:descent_lemma_aa} gives
\begin{equation}
    V_{k+1}-V_k
    \leq-\lambda\|r^k\|^2,
    \qquad
    \lambda:=\beta(c_1-\alpha K_1-\beta K_2)>0.
    \label{eq:step1_decrease}
\end{equation}
In particular, $J(z^{k+1})\leq J(z^k)\leq\ell_0$, so $z^{k+1}\in\mathcal{S}_0$. This closes the induction and shows that all iterates remain in the initial sublevel set without assuming their boundedness.

\textbf{Step 2: Relating the potential gap to the residual.}
Using smoothness at $z^{k+}$,
\begin{align*}
    J(z^k)-J(z^{k+})
    &\leq\langle\nabla J(z^{k+}),z^k-z^{k+}\rangle
      +\frac{L}{2}\|r^k\|^2\\
    &=\langle\Pi_\mathcal{S}\nabla J(z^{k+}),z^k-z^{k+}\rangle
      +\frac{L}{2}\|r^k\|^2.
\end{align*}
By \eqref{eq:gap_at_base_point}, quadratic growth, and Lemma~\ref{lem:tangential_gradient},
\[
    \|\Pi_\mathcal{S}\nabla J(z^{k+})\|
    \leq\frac{2L}{\sigma\sqrt{w}}\|r^k\|.
\]
Hence
\[
    J(z^k)-J(z^{k+})
    \leq\left(\frac{2L}{\sigma\sqrt{w}}+\frac{L}{2}\right)\|r^k\|^2.
\]
Together with \eqref{eq:gap_at_base_point}, this gives
\begin{equation}
    V_k
    \leq\mu\|r^k\|^2,
    \qquad
    \mu=\frac{2L}{\sigma\sqrt{w}}+\frac{L}{2}+\frac{2}{\sigma w}.
    \label{eq:step2_gap}
\end{equation}

\textbf{Step 3: Linear convergence.}
Combining \eqref{eq:step1_decrease} and \eqref{eq:step2_gap},
\[
    V_{k+1}
    \leq\left(1-\frac{\lambda}{\mu}\right)V_k.
\]
Let $\rho:=\lambda/\mu\in(0,1)$. Then
\begin{equation}
    V_k\leq V_0(1-\rho)^k,
    \label{eq:Q_linear_objective}
\end{equation}
so the objective values converge Q-linearly. By quadratic growth,
\begin{equation}
    \operatorname{dist}(z^k,\mathcal{Z}^*)
    \leq\sqrt{\frac{2V_k}{\sigma}}
    \leq\sqrt{\frac{2V_0}{\sigma}}\,(\sqrt{1-\rho})^k.
    \label{eq:R_linear_distance}
\end{equation}
Thus the distance to the solution set converges R-linearly.

Finally, \eqref{eq:step1_decrease} and \eqref{eq:Q_linear_objective} imply
\[
    \|r^k\|^2
    \leq\frac{V_k}{\lambda}
    \leq\frac{V_0}{\lambda}(1-\rho)^k.
\]
Using \eqref{eq:H_bound},
\[
    \|z^{k+1}-z^k\|
    \leq\sqrt{2(1+c_2)}\,\beta\sqrt{\frac{V_0}{\lambda}}
    (\sqrt{1-\rho})^k.
\]
Therefore $\sum_{k=0}^\infty\|z^{k+1}-z^k\|<\infty$, so $\{z^k\}$ is Cauchy and converges to some $z^\infty$. Since \eqref{eq:R_linear_distance} tends to zero and $\mathcal{Z}^*$ is closed, $z^\infty\in\mathcal{Z}^*$. Moreover,
\[
    \|z^k-z^\infty\|
    \leq\sum_{j=k}^\infty\|z^{j+1}-z^j\|
    \leq
    \frac{\sqrt{2(1+c_2)}\,\beta}{1-\sqrt{1-\rho}}
    \sqrt{\frac{V_0}{\lambda}}
    (\sqrt{1-\rho})^k,
\]
which proves the R-linear convergence of the iterates themselves.
\end{proof}
\section{Numerical Experiments}
\label{sec:experiments}

In this section, we validate the effectiveness of our M-IOT framework and the convergence properties of our algorithm on both synthetic and real-world datasets. We demonstrate the model's ability to recover ground-truth costs from noisy observations, as well as its applicability to urban mobility modeling and e-commerce recommendation. Detailed descriptions of data generation and preprocessing pipelines for real-world experiments are provided in the supplementary material.

\subsection{Experimental Setup and Implementation Details}
\label{subsec:setup}

First of all, we detail the hardware environment, implementation specifics, and hyperparameter settings used throughout the numerical experiments. All experiments were conducted on a laptop with an 11th Gen Intel(R) Core(TM) i7-11800H @ 2.30GHz CPU. The code were implemented in Python 3.13.2.  We solve the M-IOT convex formulation using the proposed BCDwAE (Algorithm \ref{alg:PRAABCD}).  For the iteration of BCD, the dual potentials ($u, v$) are updated via closed-form solutions, while the cost matrix $C$ is updated using Projected Gradient Descent with a backtracking line search (Armijo line search) to determine the step size. To avoid instability during the initial transient phase, we run the standard BCD (without acceleration) for a warm-start period. Acceleration is activated only after 10 iterations and when the relative KKT residual drops below $1.0$. We set the memory depth $m=8$, the mixing parameter $\beta=1.0$, and the regularization parameter $\delta=10^{-8}$ to stabilize the least-squares subproblem. A restart is triggered every 50 iterations to clear stale history.  We monitor convergence using relative KKT residual, $\mathcal{E}_{\text{KKT}}$, which is defined as follows. Let $X^{l}$ be the transport plan induced by the current iterates. We define the normalized marginal residuals as $\mathcal{E}_{\mu} = (\sum_l \|X^{l}\mathbf{1} - \mu^l\|_2) / (\|\mu\| + \epsilon)$ and $\mathcal{E}_{\nu} = (\sum_l \|X^{l\top}\mathbf{1} - \nu^l\|_2) / (\|\nu\| + \epsilon)$. The residual of the cost matrix is measured by the norm of the fixed-point residual of the projected gradient step: $\mathcal{E}_{C} = \|C - \text{Proj}_{\mathcal{C}}(C - \nabla_C J)\|_F / (\|C\|_F + 1)$.
Here $\epsilon$ is a small constant for numerical stability. Now we can define the root-mean-square error $\mathcal{E}_{\text{KKT}} = \sqrt{(\mathcal{E}_{\mu}^2 + \mathcal{E}_{\nu}^2 + \mathcal{E}_{C}^2)/3} $.  The algorithm terminates when the relative KKT residual $\mathcal{E}_{\text{KKT}}$  falls below a tolerance of $\tau = 10^{-6}$, or when the maximum number of iterations (set to 500) is reached. Throughout this section, we choose the entropy regularization parameter $\gamma = 1$ for all the experiments. Unless otherwise specified, these settings are consistent across both synthetic and real-world experiments.

\subsection{Experiments on Synthetic Data}

We generate $K$ observation pairs $\{(\mu^l, \nu^l)\}_{l=1}^K$ based on a fixed ground truth cost $C_{\text{true}} \in \mathbb{R}^{n \times n}$ with $K=20$ and $n=20$. 
We first sample $n$ points $\{p_1, \dots, p_n\}$ uniformly from the unit square $[0, 1]^2$ and then compute the ground-truth cost matrix $C_{\text{true}} \in \mathbb{R}^{n \times n}$ via squared Euclidean distances: $C_{ij} = \|p_i - p_j\|_2^2$. We generate the marginals $\{(\mu^l, \nu^l)\}_{l=1}^K$ under two distinct scenarios. The first scenario simulates continuous evolution using interpolated marginals. Specifically, we generate random starting marginals $(\mu_{\text{start}}, \nu_{\text{start}})$ and ending marginals $(\mu_{\text{end}}, \nu_{\text{end}})$. The $l$-th observation is generated via linear interpolation:
    \begin{equation}
        \mu^l = (1 - \lambda_l)\mu_{\text{start}} + \lambda_l \mu_{\text{end}}, \quad \nu^l = (1 - \lambda_l)\nu_{\text{start}} + \lambda_l \nu_{\text{end}},
    \end{equation}
    where $\lambda_l = \frac{l-1}{K-1}$ for $l=1, \dots, K$.
For the second scenario, we use random marginals as comparison to the first scenario. Each pair $(\mu^l, \nu^l)$ was generated independently by sampling from a uniform distribution over $[0, 1]^n$ and normalizing such that $\sum_i \mu^l_i = \sum_j \nu^l_j = 1$.

The observed transport plans $\hat{X}^l$ were generated by perturbing the theoretical optimal plans. Let $X^l_{\text{perfect}} = X^*(C_{\text{true}}, \mu^l, \nu^l)$ be the solution to the forward entropy-regularized OT problem with $\gamma=1.0$, obtained using the Sinkhorn algorithm\cite{cuturi2013sinkhorn}. We applied multiplicative noise:
\begin{equation}
    \hat{X}^l = X^l_{\text{perfect}} \odot \exp(\delta \cdot \mathcal{N}),
\end{equation}
where $\odot$ denotes element-wise multiplication, $\mathcal{N}$ is a matrix of standard Gaussian noise $\mathcal{N}_{ij} \sim N(0, 1)$, and $\delta=0.01$ is the noise level. Note that after noise injection, the marginals of $\hat{X}^l$ do not strictly match $\mu^l$ and $\nu^l$, testing the model's robustness to constraint violation. We compare M-IOT against two baselines: (1) Single Observation ($K=1$), and (2) Averaged Baseline (averaging $K$ independently solved IOT costs).

\begin{figure}[h!]
    \centering
    \includegraphics[width=\textwidth]{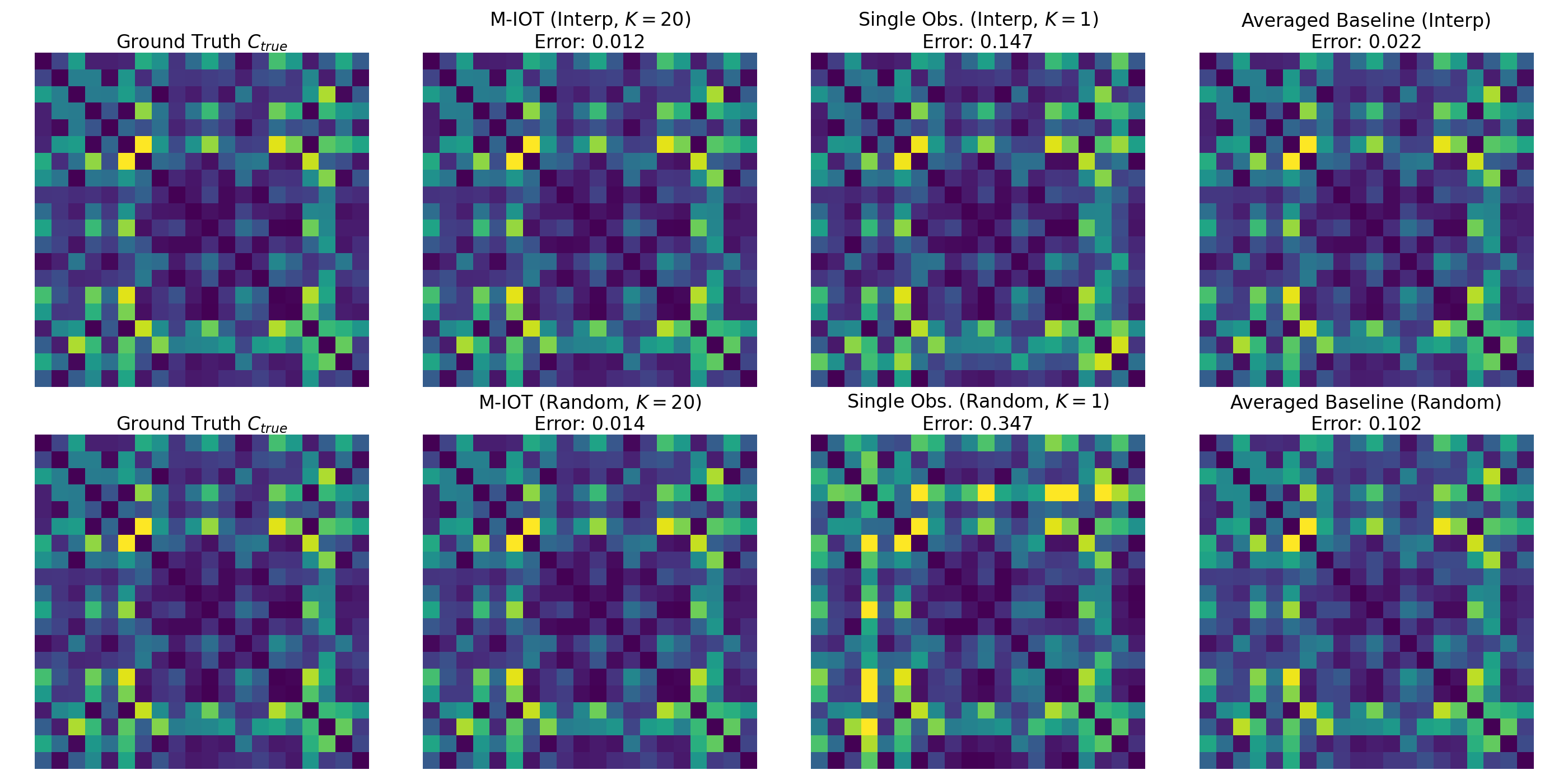}
    \caption{Qualitative and quantitative comparison of cost matrix recovery. }
    \label{fig:synth_recovery}
\end{figure}

The results are displayed in Figure~\ref{fig:synth_recovery}, which also visualizes the recovered cost matrices. The proposed M-IOT framework successfully recovers the ground-truth structure with high fidelity, outperforming the baselines in both the Interpolated and Random scenarios.  M-IOT achieves a relative error of \textbf{0.012} in the interpolated scenario, which is an order of magnitude lower than the single-observation baseline (0.147) and approximately half the error of the averaged baseline (0.022). 

\subsubsection{Impact of Observation Count ($K$) and Convergence}

We further investigate the consistency and convergence properties of the framework. In a separate ablation study, we examine the recovery error as the number of observations $K$ increases from 1 to 100. Figure \ref{fig:synth_ablation} shows an overall downward trend in the relative error for both marginal-generation scenarios, with minor fluctuations. This empirical trend suggests that integrating more snapshots of the transport process generally improves recovery of the ground-truth cost. 

\begin{figure}[h!]
    \centering
    \begin{minipage}{0.48\textwidth}
        \centering
        \includegraphics[width=\linewidth]{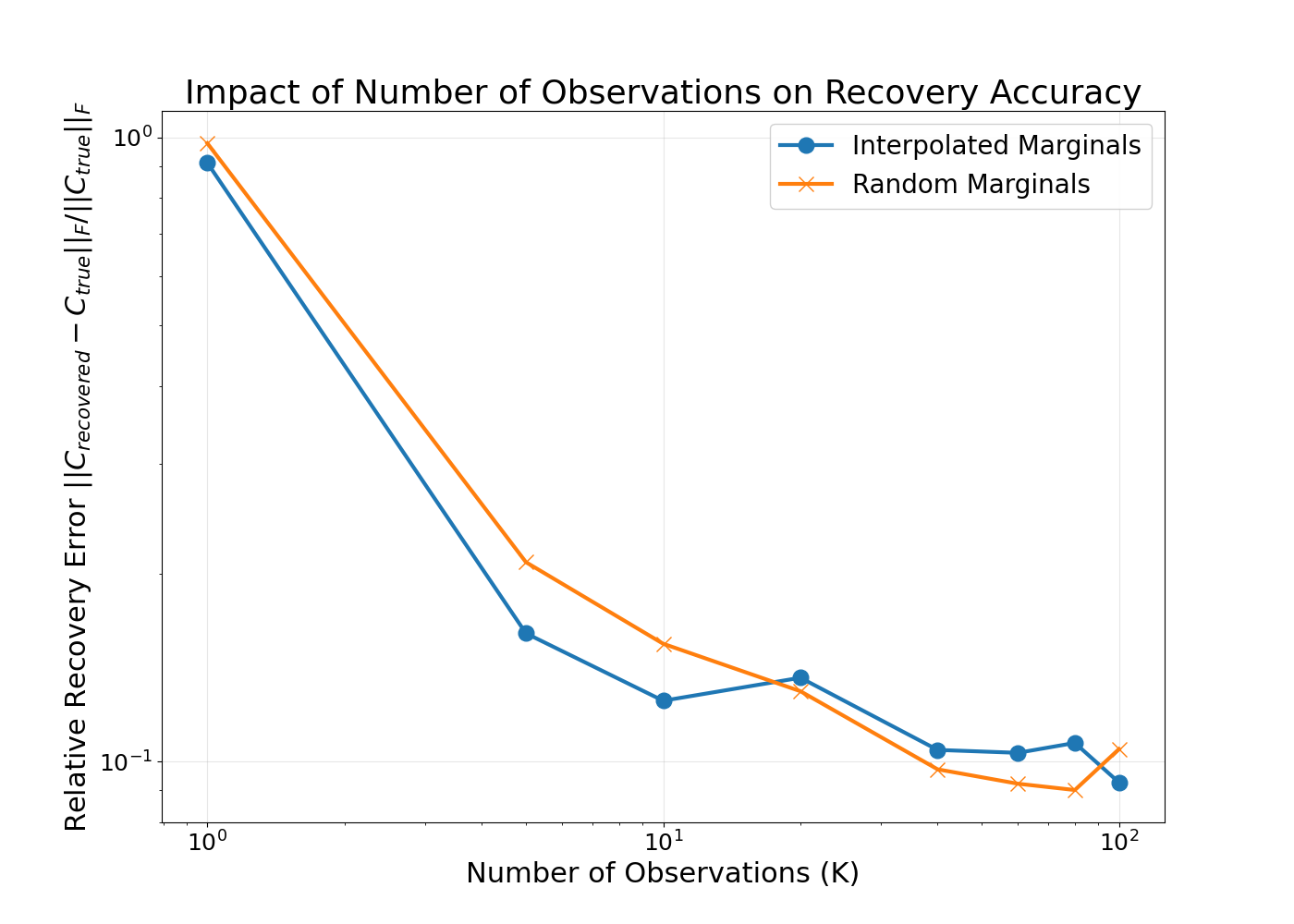}
        \caption{Recovery error vs. number of observations $K$.}
        \label{fig:synth_ablation}
    \end{minipage}
    \hfill
    \begin{minipage}{0.48\textwidth}
        \centering
        \includegraphics[width=\linewidth]{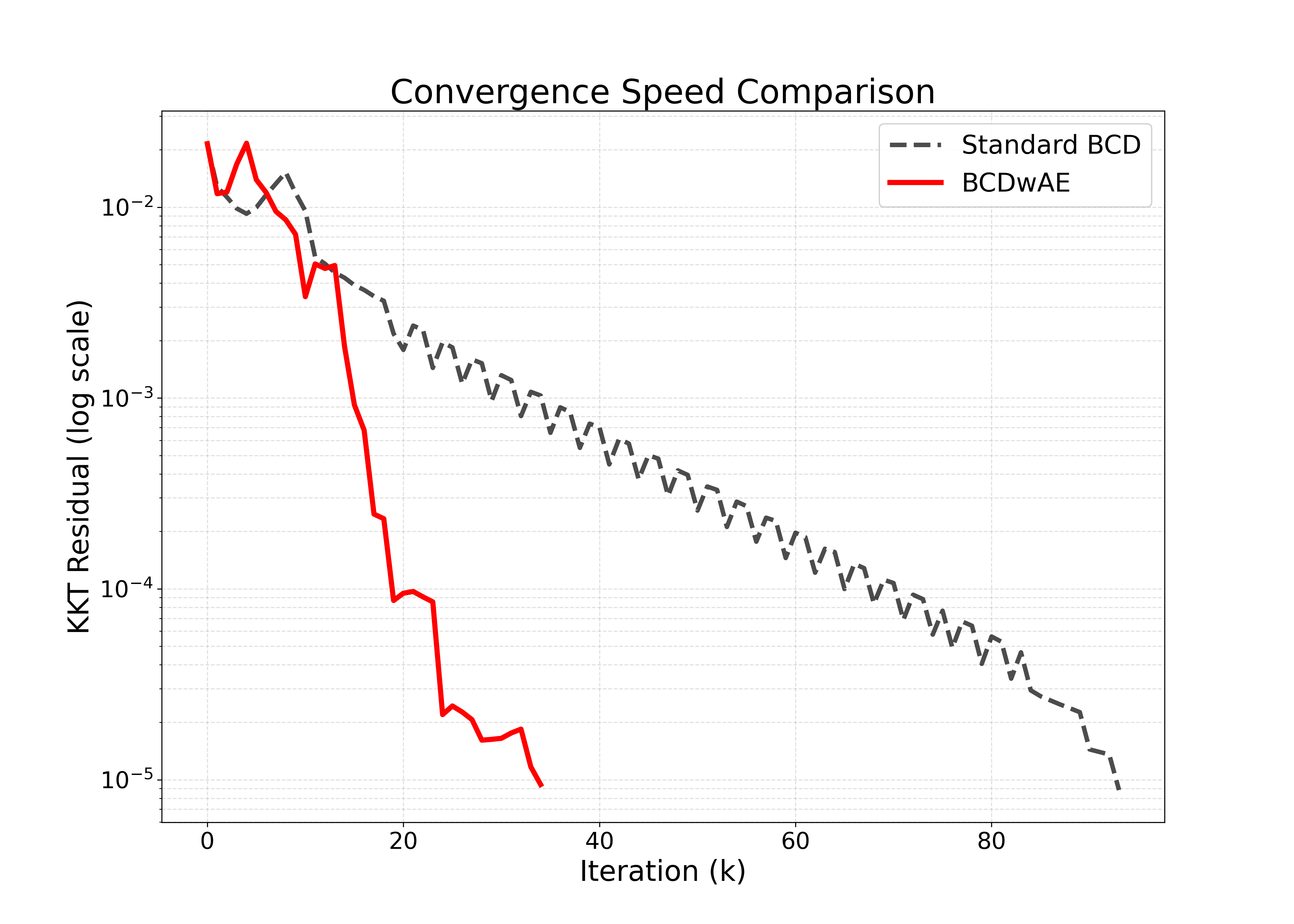}
        \caption{Acceleration of BCDwAE compared to the original method.}
        \label{fig:synth_convergence}
    \end{minipage}
\end{figure}

Then, we examine the convergence properties of the proposed algorithm. Figure \ref{fig:synth_convergence} plots the KKT residual against the iteration number on a logarithmic scale. Both the standard Block Coordinate Descent (BCD) and our Block Coordinate Descent with Anderson-type Extrapolation (BCDwAE) algorithm exhibit a linear decrease in residual, validating the Q-linear convergence rate established in Theorem \ref{thm:linear_convergence}. Furthermore, the accelerated algorithm demonstrates a steeper descent trajectory compared to the standard BCD. 

\subsection{Application: Urban Mobility (NYC Taxi Data)}
We apply M-IOT to model the urban mobility structure using the NYC Taxi dataset\footnote{https://www.kaggle.com/c/nyc-taxi-trip-duration/data}. The city is discretized into $n=100$ zones, and trip data is partitioned into $K=6$ distinct time-window scenarios (e.g., Weekday AM, Weekend Night) to capture temporal heterogeneity. For each scenario $l$, the marginals $(\mu^l, \nu^l)$ represent pickup/drop-off distributions, and the plan $\hat{X}^l$ is the aggregated flow matrix. We aim to learn a single time-invariant cost matrix $C$ that rationalizes these diverse flow patterns. To demonstrate the advantage of our data-driven approach, we benchmark against an average of independent IOT solutions and the classical Gravity Model \cite{wilson1967statistical}, the standard parametric formulation in transportation geography. The Gravity Model assumes that the flow $T_{ij}$ between zones decays with physical distance $d_{ij}$ according to a power law:$T_{ij} \propto O_i^\alpha D_j^\beta d_{ij}^{-\gamma}$, where $O_i, D_j$ are marginal outflows and inflows, and $\alpha, \beta, \gamma$ are parameters fitted via regression. 

We evaluate the prediction accuracy on test data using three complementary metrics: RMSE, Pearson Correlation, and
Common Part of Commuters (CPC) \cite{lenormand2012universal}, which is a specialized metric for transport flows defined as:
    $$
    \text{CPC}(X, \hat{X}) = \frac{2 \sum_{i,j} \min(X_{ij}, \hat{X}_{ij})}{\sum_{i,j} X_{ij} + \sum_{i,j} \hat{X}_{ij}}.
    $$
    The CPC takes values in $[0, 1]$, providing a strict measure of the intersection between the predicted and observed flow distributions (where 1 indicates perfect overlap).

\begin{table}[h!]
\centering
\caption{Performance Comparison on NYC Taxi Test Set (Averaged over 6 Scenarios)}
\label{tab:nyc_results}
\begin{tabular}{lccc}
\toprule
\textbf{Model} & \textbf{RMSE} $\downarrow$ & \textbf{Correlation} $\uparrow$ & \textbf{CPC} $\uparrow$ \\
\midrule
\textbf{M-IOT (Symmetric)} & \textbf{4.45}& \textbf{0.9989} & \textbf{0.9566}\\
Naive Average IOT & 4.76& 0.9988 & 0.9489\\
Gravity Model (Baseline) & 37.12 & 0.9076 & 0.6568 \\
\bottomrule
\end{tabular}
\end{table}
Table \ref{tab:nyc_results} summarizes the predictive performance averaged over the six test scenarios. The M-IOT framework achieves the best results across all metrics, with an RMSE of \textbf{4.45} and a CPC of \textbf{0.9566}. 
The proposed method  outperforms the parametric Gravity Model, which yields a much higher RMSE (37.12) and a lower CPC (0.6568), and outperforms the  Average IOT baseline (RMSE 4.76).   

\subsection{Application: E-commerce Recommendation}

We now apply M-IOT to learn latent consumer preferences from the "E-Commerce" dataset\footnote{https://www.kaggle.com/datasets/carrie1/ecommerce-data }. Here, purchasing behavior is modeled as a transport problem where mass moves from users to products. 
We propose a bilinear parameterization:
$$
C = G^\top B V,
$$
where $G \in \mathbb{R}^{d \times m}$ and $V \in \mathbb{R}^{d \times n}$ are fixed feature matrices for users and items, respectively, and $B \in \mathbb{R}^{d \times d}$ is the learnable interaction matrix. The optimization is performed over $B$, reducing the parameter space from $m \times n$ to $d^2$. We use the "E-Commerce" dataset, treating monthly transaction logs (Dec 2010 – Nov 2011) as $K=12$ distinct observations.  Users are aggregated into $m=50$ segments based on Recency, Frequency, and Monetary (RFM) features to form the source distribution $\mu$. The target $\nu$ consists of the top $n=100$ products. The feature matrices $G$ and $V$ are constructed from the centroids of user clusters and item attributes ($d=3$). We evaluate the learned structure on test data using  Precision@K and Recall@K, comparing against Matrix Factorization (ALS)\cite{hu2008collaborative}\footnote{Implemented via the Python \texttt{implicit} library}, Averaged IOT and Popularity Model\cite{cremonesi2010performance} which recommends the most popular commodity. Figures  \ref{fig:precision_k_curve} and \ref{fig:recall_k_curve}  illustrate the recommendation performance. We can see that our M-IOT model outperforms both the ALS and Popularity baselines across all list sizes ($K$). 

\begin{figure}[h!]
    \centering
    \begin{minipage}{0.48\textwidth}
        \centering
        \includegraphics[width=\linewidth]{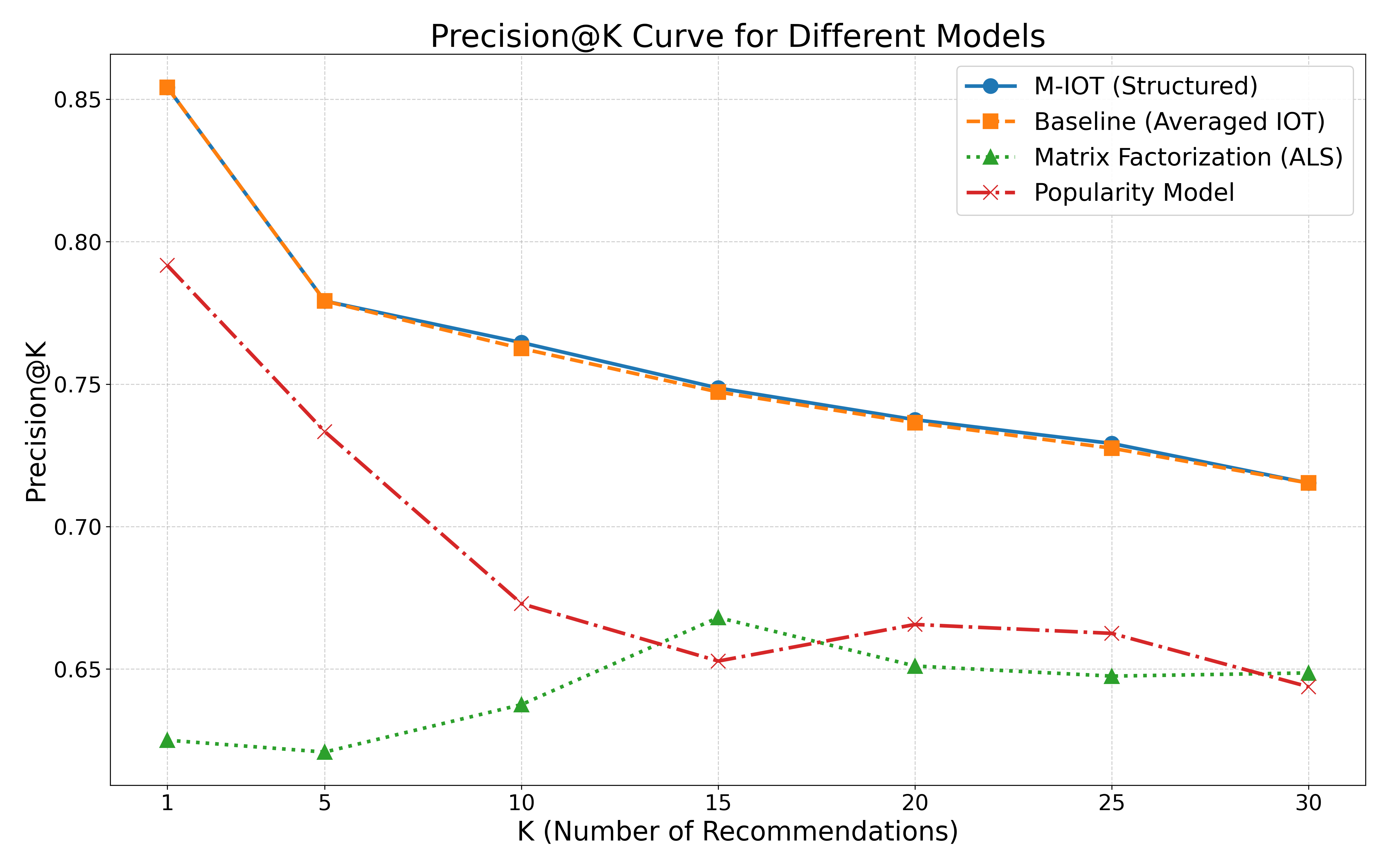}
        \caption{Precision@K curves. }
        \label{fig:precision_k_curve}
    \end{minipage}
    \hfill
    \begin{minipage}{0.48\textwidth}
        \centering
        \includegraphics[width=\linewidth]{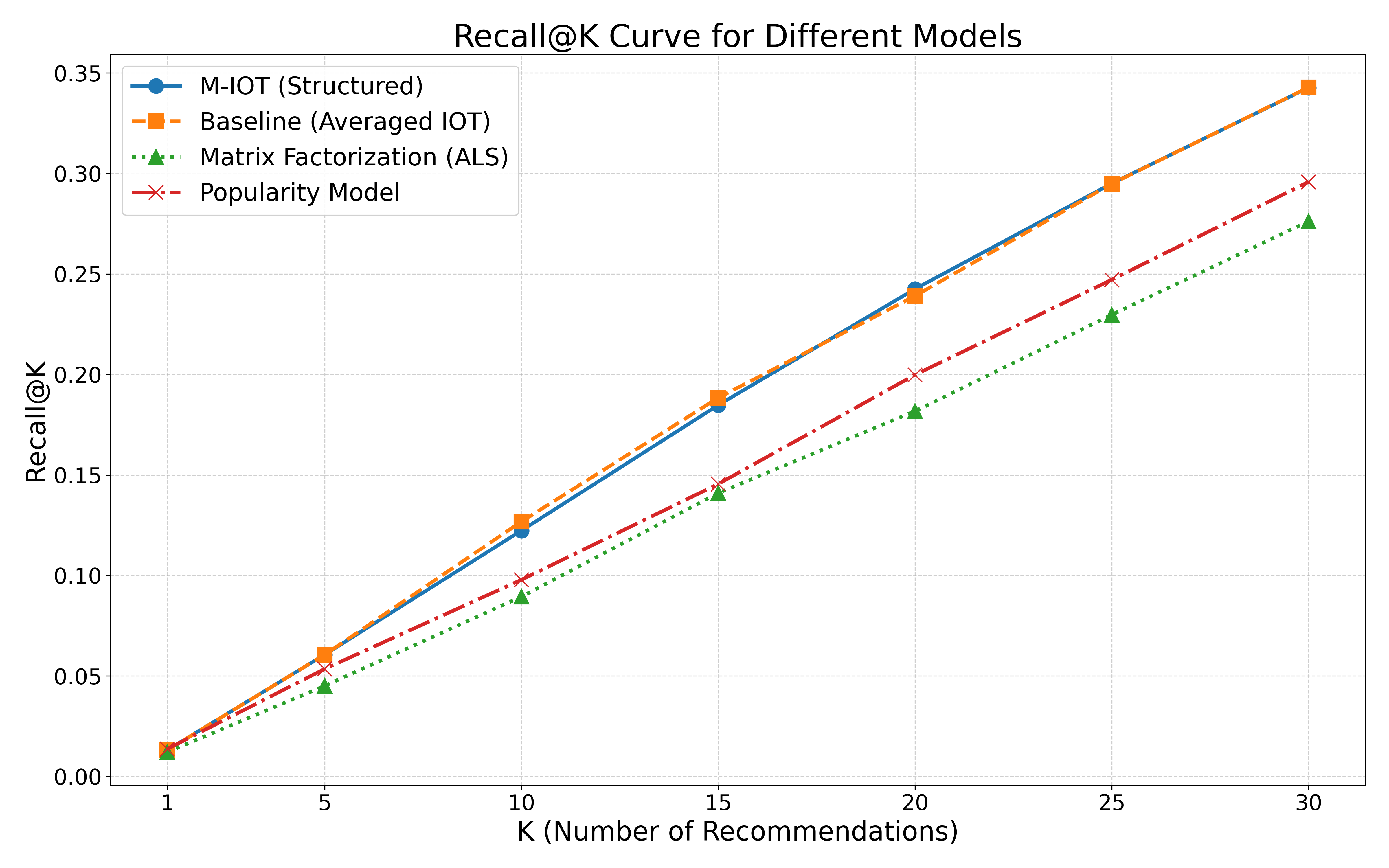}
        \caption{Recall@K curves. }
        \label{fig:recall_k_curve}
    \end{minipage}
\end{figure}

\section{Conclusion and Future Work}
\label{sec:conclusion}

In this paper, we introduce the Multi-Observation Inverse Optimal Transport (M-IOT) framework, a robust approach for inferring a universal cost function from multiple transport plans. We reformulate the bilevel inverse problem as an equivalent single-level convex optimization problem, and analyze its theoretical properties, establishing the existence of solutions and characterizing the structure of the optimal solution set. To address the computational challenges, we propose a Block Coordinate Descent with Anderson-type Extrapolation algorithm. Extensive numerical experiments on synthetic data, urban mobility flows, and e-commerce recommendations demonstrate that M-IOT achieves lower recovery errors and successfully extracts stable, invariant structural patterns from noisy, multi-view data.

Future work includes extending M-IOT beyond entropy regularization to more general regularization, modeling time-varying cost structures in dynamic settings, and improving scalability in high-dimensional problems through stochastic methods or low-rank approximations.


\bibliographystyle{siamplain}
\bibliography{references}

@article{wilson1967statistical,
  title={A statistical theory of spatial distribution models},
  author={Wilson, AG},
  journal={Transportation Research},
  volume={1},
  number={3},
  pages={253--269},
  year={1967},
  publisher={Elsevier}
}

@inproceedings{chiu2022discrete,
  title={Discrete probabilistic inverse optimal transport},
  author={Chiu, Wei-Ting and Wang, Pei and Shafto, Patrick},
  booktitle={International Conference on Machine Learning},
  pages={3925--3946},
  year={2022},
  organization={PMLR}
}

@inproceedings{wang2023self,
  title={Self-supervised video summarization guided by semantic inverse optimal transport},
  author={Wang, Yutong and Xu, Hongteng and Luo, Dixin},
  booktitle={Proceedings of the 31st ACM International Conference on Multimedia},
  pages={6611--6622},
  year={2023}
}

@inproceedings{yu2022explainable,
  title={Explainable legal case matching via inverse optimal transport-based rationale extraction},
  author={Yu, Weijie and Sun, Zhongxiang and Xu, Jun and Dong, Zhenhua and Chen, Xu and Xu, Hongteng and Wen, Ji-Rong},
  booktitle={Proceedings of the 45th international ACM SIGIR conference on research and development in information retrieval},
  pages={657--668},
  year={2022}
}

@article{carlier2023sista,
  title={Sista: learning optimal transport costs under sparsity constraints},
  author={Carlier, Guillaume and Dupuy, Arnaud and Galichon, Alfred and Sun, Yifei},
  journal={Communications on Pure and Applied Mathematics},
  volume={76},
  number={9},
  pages={1659--1677},
  year={2023},
  publisher={Wiley Online Library}
}

@article{persiianov2024inverse,
  title={Inverse Entropic Optimal Transport Solves Semi-supervised Learning via Data Likelihood Maximization},
  author={Persiianov, Mikhail and Asadulaev, Arip and Andreev, Nikita and Starodubcev, Nikita and Baranchuk, Dmitry and Kratsios, Anastasis and Burnaev, Evgeny and Korotin, Alexander},
  journal={arXiv preprint arXiv:2410.02628},
  year={2024}
}

@article{dupuy2019estimating,
  title={Estimating matching affinity matrices under low-rank constraints},
  author={Dupuy, Arnaud and Galichon, Alfred and Sun, Yifei},
  journal={Information and Inference: A Journal of the IMA},
  volume={8},
  number={4},
  pages={677--689},
  year={2019},
  publisher={Oxford University Press}
}

@article{stuart2020inverse,
  title={Inverse optimal transport},
  author={Stuart, Andrew M and Wolfram, Marie-Therese},
  journal={SIAM Journal on Applied Mathematics},
  volume={80},
  number={1},
  pages={599--619},
  year={2020},
  publisher={SIAM}
}

@article{andrade2023sparsistency,
  title={Sparsistency for inverse optimal transport},
  author={Andrade, Francisco and Peyr{\'e}, Gabriel and Poon, Clarice},
  journal={arXiv preprint arXiv:2310.05461},
  year={2023}
}

@article{li2019learning,
  title={Learning to match via inverse optimal transport},
  author={Li, Ruilin and Ye, Xiaojing and Zhou, Haomin and Zha, Hongyuan},
  journal={Journal of machine learning research},
  volume={20},
  number={80},
  pages={1--37},
  year={2019}
}

@article{ma2020learning,
  title={Learning cost functions for optimal transport},
  author={Ma, Shaojun and Sun, Haodong and Ye, Xiaojing and Zha, Hongyuan and Zhou, Haomin},
  journal={arXiv preprint arXiv:2002.09650},
  year={2020}
}

@article{gonzalez2024identifiability,
  title={Identifiability of the Optimal Transport Cost on Finite Spaces},
  author={Gonz{\'a}lez-Sanz, Alberto and Groppe, Michel and Munk, Axel},
  journal={arXiv preprint arXiv:2410.23146},
  year={2024}
}

@article{gonzalez2024nonlinear,
  title={Nonlinear inverse optimal transport: Identifiability of the transport cost from its marginals and optimal values},
  author={Gonz{\'a}lez-Sanz, Alberto and Groppe, Michel and Munk, Axel},
  journal={SIAM Journal on Mathematical Analysis},
  volume={56},
  number={6},
  pages={7808--7829},
  year={2024},
  publisher={SIAM}
}

@article{cuturi2013sinkhorn,
  title={Sinkhorn distances: Lightspeed computation of optimal transport},
  author={Cuturi, Marco},
  journal={Advances in neural information processing systems},
  volume={26},
  year={2013}
}

@book{villani2008optimal,
  title={Optimal transport: old and new},
  author={Villani, C{\'e}dric and others},
  volume={338},
  year={2008},
  publisher={Springer}
}

@article{courty2016optimal,
  title={Optimal transport for domain adaptation},
  author={Courty, Nicolas and Flamary, R{\'e}mi and Tuia, Devis and Rakotomamonjy, Alain},
  journal={IEEE transactions on pattern analysis and machine intelligence},
  volume={39},
  number={9},
  pages={1853--1865},
  year={2016},
  publisher={IEEE}
}

@article{kolouri2017optimal,
  title={Optimal mass transport: Signal processing and machine-learning applications},
  author={Kolouri, Soheil and Park, Se Rim and Thorpe, Matthew and Slepcev, Dejan and Rohde, Gustavo K},
  journal={IEEE signal processing magazine},
  volume={34},
  number={4},
  pages={43--59},
  year={2017},
  publisher={IEEE}
}

@phdthesis{genevay2019entropy,
  title={Entropy-regularized optimal transport for machine learning},
  author={Genevay, Aude},
  year={2019},
  school={Universit{\'e} Paris sciences et lettres}
}

@book{rockafellar1997convex,
  title={Convex analysis},
  author={Rockafellar, R Tyrrell},
  volume={28},
  year={1997},
  publisher={Princeton university press}
}

@inproceedings{cremonesi2010performance,
  title={Performance of recommender algorithms on top-n recommendation tasks},
  author={Cremonesi, Paolo and Koren, Yehuda and Turrin, Roberto},
  booktitle={Proceedings of the fourth ACM conference on Recommender systems},
  pages={39--46},
  year={2010}
}

@article{liu2019learning,
  title={Learning transport cost from subset correspondence},
  author={Liu, Ruishan and Balsubramani, Akshay and Zou, James},
  journal={arXiv preprint arXiv:1909.13203},
  year={2019}
}

@article{andrade2025learning,
  title={Learning from Samples: Inverse Problems over measures via Sharpened Fenchel-Young Losses},
  author={Andrade, Francisco and Peyr{\'e}, Gabriel and Poon, Clarice},
  journal={arXiv preprint arXiv:2505.07124},
  year={2025}
}

@inproceedings{shi2023understanding,
  title={Understanding and generalizing contrastive learning from the inverse optimal transport perspective},
  author={Shi, Liangliang and Zhang, Gu and Zhen, Haoyu and Fan, Jintao and Yan, Junchi},
  booktitle={International conference on machine learning},
  pages={31408--31421},
  year={2023},
  organization={PMLR}
}

@article{samaran2024scconfluence,
  title={scConfluence: single-cell diagonal integration with regularized Inverse Optimal Transport on weakly connected features},
  author={Samaran, Jules and Peyr{\'e}, Gabriel and Cantini, Laura},
  journal={Nature Communications},
  volume={15},
  number={1},
  pages={7762},
  year={2024},
  publisher={Nature Publishing Group UK London}
}

@article{elvander2025mixtures,
  title={Mixtures of ensembles: System separation and identification via optimal transport},
  author={Elvander, Filip and Haasler, Isabel},
  journal={arXiv preprint arXiv:2503.13362},
  year={2025}
}

@article{mascherpa2025convex,
  title={A convex approach for Markov chain estimation from aggregate data via inverse optimal transport},
  author={Mascherpa, Michele and Ringh, Axel and Taghvaei, Amirhossein and Karlsson, Johan},
  journal={arXiv preprint arXiv:2511.16458},
  year={2025}
}

@article{bauschke1996projection,
  title={On projection algorithms for solving convex feasibility problems},
  author={Bauschke, Heinz H and Borwein, Jonathan M},
  journal={SIAM review},
  volume={38},
  number={3},
  pages={367--426},
  year={1996},
  publisher={SIAM}
}

@article{bauschke1999strong,
  title={Strong conical hull intersection property, bounded linear regularity, Jameson’s property (G), and error bounds in convex optimization},
  author={Bauschke, Heinz H and Borwein, Jonathan M and Li, Wu},
  journal={Mathematical Programming},
  volume={86},
  number={1},
  pages={135--160},
  year={1999},
  publisher={Springer}
}

@article{wei2021stochastic,
  title={Stochastic Anderson mixing for nonconvex stochastic optimization},
  author={Wei, Fuchao and Bao, Chenglong and Liu, Yang},
  journal={Advances in Neural Information Processing Systems},
  volume={34},
  pages={22995--23008},
  year={2021}
}

@article{polyak1963gradient,
  title={Gradient methods for the minimisation of functionals},
  author={Polyak, Boris T},
  journal={USSR Computational Mathematics and Mathematical Physics},
  volume={3},
  number={4},
  pages={864--878},
  year={1963},
  publisher={Elsevier}
}

@article{anderson1965iterative,
  title={Iterative procedures for nonlinear integral equations},
  author={Anderson, Donald G},
  journal={Journal of the ACM (JACM)},
  volume={12},
  number={4},
  pages={547--560},
  year={1965},
  publisher={ACM New York, NY, USA}
}

@article{lenormand2012universal,
  title={A universal model of commuting networks},
  author={Lenormand, Maxime and Huet, Sylvie and Gargiulo, Floriana and Deffuant, Guillaume},
  year={2012},
  publisher={Public Library of Science San Francisco, USA}
}

@inproceedings{hu2008collaborative,
  title={Collaborative filtering for implicit feedback datasets},
  author={Hu, Yifan and Koren, Yehuda and Volinsky, Chris},
  booktitle={2008 Eighth IEEE international conference on data mining},
  pages={263--272},
  year={2008},
  organization={Ieee}
}

@book{bertsekas2009convex,
  title={Convex optimization theory},
  author={Bertsekas, Dimitri},
  volume={1},
  year={2009},
  publisher={Athena Scientific}
}

@inproceedings{liao2024error,
  title={Error bounds, PL condition, and quadratic growth for weakly convex functions, and linear convergences of proximal point methods},
  author={Liao, Feng-Yi and Ding, Lijun and Zheng, Yang},
  booktitle={6th Annual Learning for Dynamics \& Control Conference},
  pages={993--1005},
  year={2024},
  organization={PMLR}
}

@article{10.1088/1361-6420/ae5ace,
	author={Bao, Chenglong and Li, Zanyu and Yang, Yunan},
	title={Well-posedness and efficient algorithms for inverse optimal transport with Bregman regularization},
	journal={Inverse Problems},
	url={http://iopscience.iop.org/article/10.1088/1361-6420/ae5ace},
	year={2026}
}
\end{document}


\maketitle

\section{Algebraic Structure and Spectral Analysis of Operator A}\label{app:operatorA}

In this section, we provide an explicit construction of $A$ and conduct a complete spectral analysis of the operator $A^TA$.

\subsection{Explicit Operator Representation}
Let us first provide the explicit matrix representation of $A$ and the closed-form actions of its related operators. The operator $A: \R^{2nK+n^2} \to \R^{Kn^2}$ maps the concatenated vector of variables $z=(u^1, \dots, u^K, v^1, \dots, v^K, \vecop(C))^T$ to the vector of exponential arguments $y$, where $y_{lij} = u_i^l + v_j^l - C_{ij}$. We use column-wise vectorization within each observation block.

\textbf{Matrix Form of A.} Using the Kronecker product $\otimes$, the block matrix $A = [A_u | A_v | A_C]$ has the following components:
\begin{align}
    A_u &= I_K \otimes (\ones_n \otimes I_n), \\
    A_v &= I_K \otimes (I_n \otimes \ones_n), \\
    A_C &= -(\ones_K \otimes I_{n^2}).
\end{align}

\textbf{Action of $A^T$.} Given $y \in \R^{Kn^2}$, its adjoint action $w = A^T y$ produces $w=(w_u, w_v, w_C)$, where:
\begin{align}
    (w_u^l)_i = \sum_{j=1}^n y_{lij}, \quad (w_v^l)_j = \sum_{i=1}^n y_{lij}, \quad (w_C)_{ij} = - \sum_{l=1}^K y_{lij}.
\end{align}

\textbf{Action of $AA^T$.} Given $y \in \R^{Kn^2}$, the action $w = AA^T y$ is given by:
\begin{equation}
    w_{lij} = \left(\sum_{j'=1}^n y_{lij'}\right) + \left(\sum_{i'=1}^n y_{li'j}\right) + \left(\sum_{l'=1}^K y_{l'ij}\right).
\end{equation}

\textbf{Action of $A^TA$.} Given $z=(u,v,C)$, the action $w = A^TAz$ is given by:
\begin{align}
    (w_u^l)_i &= n u_i^l + \sum_{j=1}^n v_j^l - \sum_{j=1}^n C_{ij}, \\
    (w_v^l)_j &= n v_j^l + \sum_{i=1}^n u_i^l - \sum_{i=1}^n C_{ij}, \\
    (w_C)_{ij} &= K C_{ij} - \sum_{l=1}^K u_i^l - \sum_{l=1}^K v_j^l.
\end{align}

\subsection{The Spectrum of $A^TA$}
We show that the eigenvalues of $A^TA$ are integers determined strictly by $n$ and $K$.
\begin{theorem}[Spectrum of $A^TA$]
\label{thm:spectrum}
For the M-IOT problem with a square cost matrix and $n,K\geq2$, the set of all eigenvalues of the operator $A^TA$ is $\{0, n, K, 2n, n+K, 2n+K\}$.
\end{theorem}

\begin{proof}
Let $z = (u^1, \dots, u^K, v^1, \dots, v^K, C)$ be an eigenvector corresponding to an eigenvalue $\lambda$. The characteristic equations $A^T A z = \lambda z$ are given by:
\begin{align}
(n - \lambda)u_i^l &= \sum_{j=1}^n C_{ij} - \sum_{j=1}^n v_j^l, \label{eq:eig_u}\\
(n - \lambda)v_j^l &= \sum_{i=1}^n C_{ij} - \sum_{i=1}^n u_i^l, \label{eq:eig_v}\\
(K - \lambda)C_{ij} &= \sum_{l=1}^K u_i^l + \sum_{l=1}^K v_j^l. \label{eq:eig_C}
\end{align}
We define the following aggregate quantities: $S_u^l = \sum_i u_i^l$, $S_v^l = \sum_j v_j^l$, $R_i(C) = \sum_j C_{ij}$, $C_j(C) = \sum_i C_{ij}$, and $S_C = \sum_{i,j} C_{ij} = \sum_i R_i(C) = \sum_j C_j(C)$. First of all, summing \eqref{eq:eig_u} over $i$ and \eqref{eq:eig_v} over $j$ yields:
\begin{align}
(n-\lambda)S_u^l &= S_C - n S_v^l, \label{eq:L1_1}\\
(n-\lambda)S_v^l &= S_C - n S_u^l. \label{eq:L1_2}
\end{align}
Subtracting \eqref{eq:L1_2} from \eqref{eq:L1_1} gives $-\lambda(S_u^l - S_v^l) = 0$. For any non-zero eigenvalue $\lambda \neq 0$, we must have $S_u^l = S_v^l$. Let $S^l := S_u^l = S_v^l$.
In this case \eqref{eq:L1_1} and \eqref{eq:L1_2}  simplify to:
\begin{equation}
(2n - \lambda)S^l = S_C. \label{eq:L1_main}
\end{equation}
Summing \eqref{eq:eig_C} over all $i,j$ gives $(K-\lambda)S_C = 2n \sum_{l=1}^K S^l$. We consider two cases derived from \eqref{eq:L1_main}:
\begin{enumerate}
    \item \textbf{Case $\lambda = 2n$:} Equation \eqref{eq:L1_main} implies $S_C = 0$. The cost equation becomes $(K-2n) \cdot 0 = 2n \sum_l S^l$, implying $\sum_l S^l = 0$. Thus, $\lambda = 2n$ is an eigenvalue associated with eigenvectors where sums satisfy $S_u^l = S_v^l$, $\sum_l S^l = 0$, and $S_C = 0$.
    \item \textbf{Case $\lambda \neq 2n$:} We can write $S^l = S_C / (2n-\lambda)$. Substituting this into the summed cost equation yields:
    $$
    (K-\lambda)S_C = 2n \sum_{l=1}^K \frac{S_C}{2n-\lambda} = \frac{2nK}{2n-\lambda}S_C.
    $$
    Assuming $S_C \neq 0$, we divide by $S_C$ to get $(K-\lambda)(2n-\lambda) = 2nK$. The roots are $\lambda = 0$ (trivial) and $\lambda = 2n + K$. Thus, $\lambda = 2n+K$ is an eigenvalue.
\end{enumerate}

We now assume $S_u^l = S_v^l = 0$ for all $l$, and $S_C = 0$.
Equations \eqref{eq:eig_u} and \eqref{eq:eig_v} simplify to $(n-\lambda)u_i^l = R_i(C)$ and $(n-\lambda)v_j^l = C_j(C)$.
Let $\bar{u}_i = \sum_l u_i^l$ and $\bar{v}_j = \sum_l v_j^l$. Summing the simplified equations over $l$:
\begin{equation}
(n-\lambda)\bar{u}_i = K R_i(C), \quad (n-\lambda)\bar{v}_j = K C_j(C). \label{eq:L2_uv}
\end{equation}
Note that \eqref{eq:eig_C} simplifies into $(K-\lambda)C_{ij} = \bar{u}_i + \bar{v}_j$. Summing this over $j$ gives:
$$
(K-\lambda)R_i(C) = n \bar{u}_i + \sum_{j=1}^n \bar{v}_j = n \bar{u}_i,
$$
since $\sum_j \bar{v}_j = \sum_l S_v^l = 0$.
We now have a system for $R_i(C)$ and $\bar{u}_i$:
$$
(n-\lambda)\bar{u}_i = K R_i(C) \quad \text{and} \quad (K-\lambda)R_i(C) = n \bar{u}_i.
$$
Multiplying the two gives $(n-\lambda)(K-\lambda)R_i(C) = nK R_i(C)$. Assuming $R_i(C) \neq 0$, we have $(n-\lambda)(K-\lambda) = nK$, which simplifies to $\lambda^2 - (n+K)\lambda = 0$. The non-zero root is $\lambda = n + K$.

Finally, we consider the subspace where global sums are zero ($S=0$) and marginals are zero ($R_i(C) = 0, C_j(C) = 0$).
\begin{enumerate}
    \item \textbf{Sub-case $u, v \neq 0$:} Since $R_i(C)=0$, Eq. \eqref{eq:eig_u} becomes $(n-\lambda)u_i^l = 0$. For a non-zero $u$, we must have $\lambda = n$. Consistency check: If $\lambda=n$, Eq. \eqref{eq:eig_C} becomes $(K-n)C_{ij} = \bar{u}_i + \bar{v}_j$. By choosing $C=0$ and ensuring $\bar{u}_i + \bar{v}_j = 0$ (which is possible while maintaining $u \neq 0$), $\lambda = n$ is established as an eigenvalue.
    \item \textbf{Sub-case $u=v=0, C \neq 0$:} If $u=v=0$, Eq. \eqref{eq:eig_C} becomes $(K-\lambda)C_{ij} = 0$. For a non-zero $C$, we must have $\lambda = K$. This is valid for any cost matrix $C$ with zero row/column sums (which exists for $n \ge 2$).
\end{enumerate}

Collecting all cases, we have proved that the non-zero eigenvalue must lie in the set $\{n, K, 2n, n+K, 2n+K\}$.  Now we explicitly construct a non-zero eigenvector $z = (u^1, \dots, u^K, v^1, \dots, v^K, C)$ for each value in this set. We assume $n, K \ge 2$ to ensure non-trivial subspaces exist. Let $\mathbf{1}_n$ denote the vector of all ones and $\mathbf{0}_n$ the zero vector.

\begin{itemize}
    \item \textbf{For $\lambda = 0$:}
    Let $C = 0$ and choose $u^l=\mathbf{1}_n, v^l = -\mathbf{1}_n$.
    \item \textbf{For $\lambda = K$:} 
     Let $u^l = \mathbf{0}_n$ and $v^l = \mathbf{0}_n$ for all $l$. Let $C$ be any non-zero matrix with zero row and column sums (e.g., $C_{11}=1, C_{12}=-1, C_{21}=-1, C_{22}=1$, and zeros elsewhere). 

    \item \textbf{For $\lambda = n$:} 
     Let $C = 0$. Let $v^l = \mathbf{0}_n$ for all $l$. Construct $u$ such that $\sum_i u_i^l = 0$ for all $l$ and $\sum_l u^l = 0$ . For example, set $u^1 = (1, -1, 0, \dots, 0)^T$, $u^2 = -u^1$, and $u^l = \mathbf{0}_n$ for $l > 2$.

    \item \textbf{For $\lambda = 2n$:} 
     Let $C = 0$. Set $u^l = v^l = \alpha_l \mathbf{1}_n$, where $\sum_l \alpha_l = 0$. For instance, let $\alpha_1 = 1, \alpha_2 = -1$, and $\alpha_l = 0$ for $l > 2$.

    \item \textbf{For $\lambda = n + K$:} 
     Let $u^l = \bar{u}$ and $v^l = \bar{u}$ for all $l$, where $\bar{u}$ is a vector satisfying $\sum_i \bar{u}_i = 0$ (e.g., $\bar{u} = (1, -1, 0, \dots, 0)^T$).
    Construct $C$ as $C_{ij} = -\frac{K}{n}(\bar{u}_i + \bar{u}_j)$. Note that $R_i(C) = \sum_j -\frac{K}{n}(\bar{u}_i + \bar{u}_j) = -\frac{K}{n}(n\bar{u}_i + 0) = -K\bar{u}_i$.

    \item \textbf{For $\lambda = 2n + K$:} 
    Let $u^l = \mathbf{1}_n$ and $v^l = \mathbf{1}_n$ for all $l$. Set $C$ to be a constant matrix $C_{ij} = -K/n$.
\end{itemize}

Since we have explicitly constructed non-zero eigenvectors for each candidate value, the spectrum of $A^T A$ is exactly $\{0, n, K, 2n, n+K, 2n+K\}$. This completes the proof.
\end{proof}

\section{Auxiliary Proofs for the Convergence Analysis}\label{app:convergence_proofs}

We use the notation and assumptions of Section~\ref{sec:PRAABCD} of the main article.

\subsection{Anderson-step bounds}\label{app:anderson_bound}

\begin{proof}[Proof of Lemma~\ref{lem:Anderson_bound}]
Let $\Gamma=\Gamma_k^*$, $R=R^k$, $Z=Z^k$, $r=r^k$, and $\delta=\delta_k$. Since $\Gamma$ minimizes the regularized least-squares problem, comparison with $\Gamma=0$ gives
\[
    \|r-R\Gamma\|^2+\delta\|Z\Gamma\|^2\leq\|r\|^2.
\]
In particular, $\|r-R\Gamma\|\leq\|r\|$ and $\|Z\Gamma\|\leq\delta^{-1/2}\|r\|$. The normal equation $R^T(R\Gamma-r)+\delta Z^TZ\Gamma=0$ additionally yields, after taking the inner product with $\Gamma$,
\[
    \|R\Gamma\|^2+\delta\|Z\Gamma\|^2
    =\langle R\Gamma,r\rangle,
\]
and hence $\|R\Gamma\|\leq\|r\|$. Therefore
\[
    \|M^kr^k\|
    =\alpha_k\|(Z+\beta_kR)\Gamma\|
    \leq\alpha_k(\delta_k^{-1/2}+\beta_k)\|r^k\|
    \leq\alpha_k\beta_k(1+\sqrt{c_2})\|r^k\|.
\]
Moreover, with $\widetilde r=r-R\Gamma$,
\[
    H^kr^k
    =\beta_k\big((1-\alpha_k)r+\alpha_k\widetilde r\big)
      -\alpha_kZ\Gamma.
\]
Using $0\leq\alpha_k\leq1$ and the preceding bounds,
\[
    \|H^kr^k\|
    \leq\beta_k\|r\|+\alpha_k\delta_k^{-1/2}\|r\|
    \leq\beta_k(1+\sqrt{c_2})\|r\|.
\]
Finally, $(1+\sqrt{c_2})^2\leq2(1+c_2)$, which proves \eqref{eq:H_bound}.
\end{proof}

\subsection{Localization and uniform curvature bounds}\label{app:localization}

\begin{proof}[Proof of Lemma~\ref{lem:localization}]
By Lemma~\ref{lem:compact_mod_lineality}, $\widehat{\mathcal{S}}_0:=\mathcal{S}_0\cap\mathcal{W}$ is compact and $\mathcal{S}_0=\widehat{\mathcal{S}}_0+\mathcal{N}$.

Consider one base BCD sweep from $z=(u,v,C)\in\mathcal{S}_0$, and set $y:=Az$. Since $J(z)=g(y)\leq\ell_0$, the vector $y$ ranges over the compact set $\mathcal{S}_{\ell_0}^g$. The $u$-block increment can be written in terms of $y$ as
\begin{equation}\label{eq:local_u_increment}
    u_i^{l,+}-u_i^l
    =\gamma\log\!\left(
       \frac{\mu_i^l}{\sum_{j=1}^n\exp(y_{lij}/\gamma)}
      \right).
\end{equation}
Hence $u^+-u$ is a continuous function of $y$ and is therefore uniformly bounded on $\mathcal{S}_{\ell_0}^g$. Moreover,
\[
    Az_u=y+(u^+-u)\oplus 0
\]
is also a continuous function of $y$, so $Az_u$ ranges over a compact set. Applying the same argument to the explicit $v$-update gives
\begin{equation}\label{eq:local_v_increment}
    v_j^{l,+}-v_j^l
    =\gamma\log\!\left(
      \frac{\nu_j^l}{\sum_{i=1}^n
      \exp((Az_u)_{lij}/\gamma)}
      \right),
\end{equation}
and therefore both $v^+-v$ and $Az_v$ are uniformly bounded as $z$ ranges over $\mathcal{S}_0$.

Since
\[
    \nabla_CJ(z_v)
    =\sum_{l=1}^K\hat X^l
      -\sum_{l=1}^K\exp((Az_v)^l/\gamma),
\]
continuity on the compact range of $Az_v$ yields a constant $G_C<\infty$ such that $\|\nabla_CJ(z_v)\|\leq G_C$. Using $C=\Proj_\mathcal{C}(C)$ and the nonexpansiveness of projection,
\[
    \|C^+-C\|
    \leq \eta\|\nabla_CJ(z_v)\|
    \leq \bar\eta G_C.
\]
Thus there exists a constant $R_0=R_0(\bar\eta)<\infty$, independent of the actual $\eta\in(0,\bar\eta]$, such that
\begin{equation}\label{eq:local_base_bounds}
    \|z_u-z\|\leq R_0,
    \qquad
    \|z_v-z\|\leq R_0,
    \qquad
    \|z^+-z\|\leq R_0,
    \qquad \forall z\in\mathcal{S}_0.
\end{equation}
In particular, $\|r\|\leq R_0$ for every base residual generated from a point in $\mathcal{S}_0$.

Let $c_H:=\sqrt{2(1+c_2)}$ and set
\[
    R:=\max\{R_0,c_HR_0\}.
\]
Define the compact convex enlargement of the canonical sublevel slice
\[
    \widehat{\mathcal{B}}
    :=\widehat{\mathcal{S}}_0+
      \{w\in\mathcal{W}:\|w\|\leq R\},
\]
and lift it back along the lineality directions:
\begin{equation}\label{eq:localized_set}
    \mathcal{B}:=(\widehat{\mathcal{B}}+\mathcal{N})\cap\mathcal{Z}.
\end{equation}
Since $\widehat{\mathcal{B}}\subseteq\mathcal{W}$ is compact and convex, $\mathcal{B}$ is convex, $\mathcal{B}+\mathcal{N}=\mathcal{B}$, and
\[
    \mathcal{B}\cap\mathcal{W}
    =\widehat{\mathcal{B}}\cap\mathcal{Z}
\]
is compact.

Now suppose $z^j\in\mathcal{S}_0$. Its canonical representative $\widehat z^j:=\Pi_\mathcal{W}z^j$ belongs to $\widehat{\mathcal{S}}_0$. For any $q\in\{z_u^j,z_v^j,z^{j+}\}$, \eqref{eq:local_base_bounds} gives
\[
    \|\Pi_\mathcal{W}q-\widehat z^j\|
    \leq\|q-z^j\|\leq R_0,
\]
so $\Pi_\mathcal{W}q\in\widehat{\mathcal{B}}$. Since $q\in\mathcal{Z}$ and
$q=\Pi_\mathcal{W}q+\Pi_\mathcal{N}q$, we obtain $q\in\mathcal{B}$. For the accelerated point, \eqref{eq:H_bound} and the nonexpansiveness of projection yield
\[
    \|z^{j+1}-z^j\|
    \leq\|H^jr^j\|
    \leq c_H\beta_j\|r^j\|
    \leq c_HR_0.
\]
The same projection argument gives $z^{j+1}\in\mathcal{B}$. Thus all points used at the first $k$ iterations are localized in the same set $\mathcal{B}$.

It remains to obtain uniform curvature constants. Every $z\in\mathcal{B}$ can be written uniquely as $z=w+d$ with $w=\Pi_\mathcal{W}z\in\mathcal{B}\cap\mathcal{W}$ and $d\in\mathcal{N}\subseteq\ker(A)$. Hence $Az=Aw$ and
\[
    \nabla^2J(z)=\frac1\gamma A^T\diag(\exp(Aw/\gamma))A.
\]
Since $\mathcal{B}\cap\mathcal{W}$ is compact, we may define
\[
    L_{\bar\eta}
    :=\max_{w\in\mathcal{B}\cap\mathcal{W}}
       \|\nabla^2J(w)\|<\infty,
\]
which makes $\nabla J$ $L_{\bar\eta}$-Lipschitz on the convex set $\mathcal{B}$. Likewise,
\[
    d_{\min}:=
    \min_{w\in\mathcal{B}\cap\mathcal{W}}
    \min_q\exp((Aw)_q/\gamma)>0,
\]
so the Hessians of the $u$- and $v$-block subproblems are bounded below by
\[
    \kappa_{\bar\eta}I,
    \qquad
    \kappa_{\bar\eta}:=\frac{nd_{\min}}{\gamma}>0.
\]
This completes the proof.
\end{proof}

\section{Detailed Experimental Setups}
\label{app:experiments}

This appendix details the data generation, preprocessing, baselines, and evaluation metrics used in the numerical experiments in Section 6.

\subsection{NYC Taxi Data Preprocessing and Baselines}
We utilized the NYC Taxi Trip Record dataset. The preprocessing pipeline involved:
\begin{enumerate}
    \item \textbf{Geospatial Discretization:} The region of interest (Manhattan and surroundings) was mapped to a $10 \times 10$ grid ($n=100$). Trip pickup and drop-off coordinates were mapped to these grid indices.
    \item \textbf{Scenario Definition:} We partitioned the data into $K=6$ scenarios based on specific time windows to capture temporal variability. The definitions are:
    \begin{itemize}
        \item \textbf{Scenario 1 (Weekday AM):} Mon-Fri, 07:00 -- 09:59.
        \item \textbf{Scenario 2 (Weekday Midday):} Mon-Fri, 10:00 -- 15:59.
        \item \textbf{Scenario 3 (Weekday PM):} Mon-Fri, 16:00 -- 18:59.
        \item \textbf{Scenario 4 (Weekday Night):} Mon-Fri, 19:00 -- 06:59 (next day).
        \item \textbf{Scenario 5 (Weekend Day):} Sat-Sun, 10:00 -- 18:59.
        \item \textbf{Scenario 6 (Weekend Night):} Sat-Sun, 19:00 -- 09:59 (next day).
    \end{itemize}
    \item \textbf{Training/Testing Split:} Data from months prior to June were used to construct the training plans $\{\hat{X}^l_{\text{train}}\}$. Data from June were used to construct the ground-truth test plans $\{\hat{X}^l_{\text{test}}\}$. All plans were normalized to sum to 1.
\end{enumerate}

The Gravity Model\cite{wilson1967statistical} serves as a parametric baseline. It models the flow $T_{ij}$ between zone $i$ and zone $j$ as:
\begin{equation}
    T_{ij} \propto O_i^\alpha D_j^\beta d_{ij}^{-\gamma},
\end{equation}
where $O_i$ is the total outflow from $i$, $D_j$ is the total inflow to $j$, and $d_{ij}$ is the Euclidean distance between zone centroids. We implemented this using a Negative Binomial Regression (a Generalized Linear Model) to handle over-dispersed count data. The parameters $\alpha, \beta, \gamma$ were fitted on the aggregated training data. During testing, the fitted model predicts flows based on the test set's marginals ($O_i, D_j$) and fixed distances.

We adopt three complementary metrics to assess the predictive performance of different models on the NYC taxi flow prediction task. Root Mean Square Error (RMSE) quantifies the absolute error between predicted and actual flows:

\begin{equation}
\text{RMSE}(X_{\text{pred}}, X_{\text{true}}) = \sqrt{\frac{1}{n^2} \sum_{i=1}^{n}\sum_{j=1}^{n} (X_{\text{pred},ij} - X_{\text{true},ij})^2},
\end{equation}

where $X_{\text{true}} \in \mathbb{R}^{n \times n}$ is the observed flow matrix and $X_{\text{pred}}$ is the predicted flow matrix. Lower RMSE indicates better overall agreement in absolute flow values.

Pearson Correlation Coefficient (Correlation) measures the linear association between the predicted and true flow patterns:

\begin{equation}
\text{Corr}(X_{\text{pred}}, X_{\text{true}}) = \frac{\text{Cov}(\text{vec}(X_{\text{pred}}), \text{vec}(X_{\text{true}}))}{\sigma_{\text{pred}} \sigma_{\text{true}}},
\end{equation}

where $\text{vec}(\cdot)$ denotes vectorization, $\text{Cov}(\cdot,\cdot)$ is covariance, and $\sigma$ denotes standard deviation. This metric, bounded within $[-1, 1]$, assesses whether the model captures the relative spatial patterns of flows, with higher values indicating better structural alignment.

Common Part of Commuters (CPC)\cite{lenormand2012universal} evaluates the overlap between predicted and observed flows, representing the fraction of commuters correctly assigned to origin--destination pairs. It is defined as

\begin{equation}
\text{CPC}(X_{\text{pred}}, X_{\text{true}}) = \frac{2 \sum_{i=1}^{n}\sum_{j=1}^{n} \min(X_{\text{pred},ij}, X_{\text{true},ij})}{\sum_{i=1}^{n}\sum_{j=1}^{n} X_{\text{pred},ij} + \sum_{i=1}^{n}\sum_{j=1}^{n} X_{\text{true},ij}}.
\end{equation}

The CPC metric, bounded within $[0, 1]$, directly measures the proportion of flow mass that is correctly allocated in the predicted transport plan. A CPC value of 1 indicates perfect overlap between predicted and observed flows.

These three metrics provide a comprehensive assessment: RMSE measures absolute error magnitude, Correlation evaluates structural similarity, and CPC quantifies the practical overlap in flow assignments—all crucial for transportation planning applications.

\subsection{E-commerce Data and Evaluation Metrics}

For data cleaning, transactions with nonpositive quantities (including cancellations), zero-valued records, or missing Customer IDs were removed. For each user, Recency was defined as days since the last purchase, Frequency as the total number of transactions, and Monetary as total spending. These three features were Z-score normalized. We applied K-Means clustering with $k=50$ to generate the user segments. The cluster centers served as the user feature matrix $G$. The Matrix Factorization baseline was implemented using the Alternating Least Squares (ALS) algorithm from the \texttt{implicit} Python library, with 20 factors and regularization parameter $0.1$.

To evaluate the recommendation quality, we ranked items for each user segment $i$ based on the predicted transport mass (probability) $X_{ij}$. Let $\mathcal{R}_i$ be the set of true relevant items (purchased by segment $i$ in the test month), and $\mathcal{P}_i@K$ be the top-$K$ items predicted by the model.
\begin{itemize}
    \item \textbf{Precision@K:} The fraction of recommended items that are relevant.
    \begin{equation}
        \text{Precision@K} = \frac{1}{m} \sum_{i=1}^m \frac{|\mathcal{P}_i@K \cap \mathcal{R}_i|}{K}.
    \end{equation}
    \item \textbf{Recall@K:} The fraction of relevant items that were successfully recommended.
    \begin{equation}
        \text{Recall@K} = \frac{1}{m} \sum_{i=1}^m \frac{|\mathcal{P}_i@K \cap \mathcal{R}_i|}{|\mathcal{R}_i|}.
    \end{equation}
\end{itemize}

\bibliographystyle{siamplain}
\bibliography{references}